\theoremstyle{plain}
\newtheorem{theorem}{Theorem}[section]		
\newtheorem{lemma}[theorem]{Lemma}
\newtheorem*{claim}{Claim}
\newtheorem{proposition}[theorem]{Proposition}
\newtheorem{corollary}[theorem]{Corollary}
\newtheorem{definition}[theorem]{Definition}
\theoremstyle{remark}
\newcommand{\EE}{\mathbb{E}}
\DeclareMathOperator{\len}{len}
\newcommand{\eps}{\ensuremath{\varepsilon}}
\let\emptyset\varnothing
\let\originalleft\left
\let\originalright\right
\renewcommand{\left}{\mathopen{}\mathclose\bgroup\originalleft}
\renewcommand{\right}{\aftergroup\egroup\originalright}
\def\imod#1{\allowbreak\mkern10mu({\operator@font mod}\,\,#1)}
\newcommand{\floor}[1]{\left\lfloor #1 \right \rfloor}
\newcommand{\ceil}[1]{\left\lceil #1 \right \rceil}
\newcommand{\subs}{\subseteq}
\newcommand{\sups}{\supseteq}
\newcommand{\sm}{\setminus}
\newcommand{\cplt}{\rightrightarrows} 
\newcommand{\tA}{\tilde{A}}
\newcommand{\tB}{\tilde{B}}
\newcommand{\tT}{\tilde{T}}
\newcommand{\tn}{\tilde{n}}
\newcommand{\teps}{\tilde{\eps}}
\newcommand{\intrans}{intransitive\xspace} 
\title{Powers of paths and cycles in tournaments}
\author{
Ant\'onio Gir\~ao\thanks{Institut f\"ur Informatik, Universit\"at Heidelberg, Germany. E-mail: \texttt{a.girao@informatik.uni-heidelberg.de}.}
\thanks{Supported by Deutsche Forschungsgemeinschaft (DFG, German Research Foundation) under Germany’s Excellence Strategy EXC-2181/1 - 390900948 (the Heidelberg STRUCTURES Cluster of Excellence).}
\and
D\'aniel Kor\'andi\thanks{Mathematical Institute, University of Oxford, Andrew Wiles Building, Radcliffe Observatory Quarter, Woodstock Road, Oxford, United Kingdom.   Emails: \texttt{\{korandi, scott\}@maths.ox.ac.uk}.} \thanks{Supported by SNSF Postdoc.Mobility Fellowship P400P2\_186686. }
\and
Alex Scott\footnotemark[3]
\thanks{Research supported by EPSRC grant EP/V007327/1.}
}
\date{}
\begin{document}

\maketitle

\begin{abstract}
We show that for every positive integer $k$, any tournament can be partitioned into at most $2^{ck}$ $k$-th powers of paths. This result is tight up to the exponential constant. Moreover, we prove that for every $\eps>0$ and every integer $k$, any tournament on $n\ge \eps^{-Ck}$ vertices which is $\eps$-far from being transitive contains the $k$-th power of a cycle of length $\Omega(\eps n)$; both bounds are tight up to the implied constants.
\end{abstract}

\section{Introduction}

Tournaments are complete graphs where every edge has an orientation. 
A simple exercise (\cite{R34}) shows that any tournament contains a Hamilton path, i.e., a directed path which passes through every vertex. A natural generalisation of a directed path is a $k$-th power of a path which consists of a sequence of vertices $x_1,x_2,\dots,x_n$ with the property that $x_i \rightarrow x_j$ for every $1\le i<j \le i+k\le n$. When $k\ge n-1$, this $k$-th power of a path is an $n$-vertex transitive tournament.

In \cite{Y21}, Yuster investigated the problem of estimating the minimum over all $n$-vertex tournaments of the maximum length of a power of a path. He showed that any tournament on $n$ vertices must contain a power of a directed path on at least $n^{0.295}$ vertices. Confirming a conjecture of Yuster, Dragani\'c et al.~\cite{DDF+} showed that for every $k$ there always exists a $k$-th power of a path of linear order.

\begin{theorem}[Dragani\'c et al.~\cite{DDF+}] \label{thm:tournamentpath}
For every positive integer $k$, any tournament on $n$ vertices contains the $k$-th power of a path of length $n/2^{(2+o(1))k}$.
\end{theorem}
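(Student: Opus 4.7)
The plan is to proceed by induction on $k$, with the base case $k=1$ following from R\'edei's theorem that every tournament contains a Hamilton path, which trivially gives a first power of a path of length $n$.

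For the inductive step, I would pick a vertex $v$ of maximum out-degree in $T$; then $|N^+(v)| \ge (n-1)/2$. Applying the inductive hypothesis to the sub-tournament $T[A]$ with $A = N^+(v)$ produces a $(k-1)$-th power of a path $Q = w_1, w_2, \ldots, w_m$ of length $m \ge |A|/2^{(2+o(1))(k-1)} \approx n/2^{(2+o(1))(k-1)+1}$. Since $Q$ is a $(k-1)$-th power, we have $w_i \to w_j$ for all $|i-j| \le k-1$; what is missing for $Q$ to be a $k$-th power is precisely the orientations of the ``long'' edges $\{w_i, w_{i+k}\}$, none of which are guaranteed by the inductive hypothesis.

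The heart of the proof is then to extract from $Q$ a $k$-th power of a path of length $\Omega(m)$. One natural strategy is to select a sub-sequence $w_{i_1}, w_{i_2}, \ldots, w_{i_\ell}$ of length $\ell \ge m/2$, chosen so that all newly-required edges at distance $k$ in the sub-sequence are forward; this might be achieved by a greedy procedure that iteratively deletes vertices involved in bad long edges, combined with an averaging argument bounding the number of such vertices. The vertex $v$ can moreover be prepended to extend the path by one, since it dominates all of $A$ and hence dominates every initial segment of the extracted power.

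The main obstacle is the extraction step: controlling the orientations of the long edges $(w_i, w_{i+k})$ without sacrificing too many vertices. A direct random sub-sampling argument is likely insufficient, since it only controls the expected number of bad long edges. A more delicate argument — perhaps a dichotomy (either many forward long edges in $Q$, giving a direct extraction, or a denser substructure in which to recurse), or a median-type argument applied to the sequence $w_1, \ldots, w_m$ within $T$ — should be needed. Losing a factor of $2$ from passing to $N^+(v)$ and another factor of $2$ from the extraction at each inductive step accounts for the $2^{(2+o(1))k}$ factor in the final bound.
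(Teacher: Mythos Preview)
This theorem is not proved in the present paper: it is quoted as a result of Dragani\'c et al.\ \cite{DDF+}, and the paper only uses it as a black box (in the proof of \Cref{lem:absorbfree}). There is therefore no ``paper's own proof'' to compare against here.

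That said, your proposal has a genuine gap, and it is exactly the step you flag as the main obstacle. You obtain a $(k-1)$-th power $Q = w_1,\dots,w_m$ inside $N^+(v)$ and then hope to extract from $Q$ a $k$-th power of length $\ge m/2$. But a $(k-1)$-th power imposes \emph{no} constraint whatsoever on the orientation of the distance-$k$ edges $\{w_i,w_{i+k}\}$; an adversary may direct every one of them backwards. In that case no contiguous sub-interval of $Q$ of length $>k$ is a $k$-th power. Passing to a non-contiguous subsequence does not help either: once you skip any vertex, the ``new'' distance-$j$ edges (for $j\le k-1$) in the subsequence can correspond to edges of distance $\ge k$ in the original indexing, about which you know nothing. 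So the guaranteed $(k-1)$-th-power structure is destroyed the moment you thin the sequence, and you are essentially back to finding a $k$-th power in an arbitrary tournament on $m$ vertices --- the very problem you are trying to solve. The ``greedy deletion'' and ``random sub-sampling'' ideas you mention both run into this: deleting a vertex to kill a bad distance-$k$ edge immediately creates new distance-$k$ edges that were previously at distance $k+1$ and hence uncontrolled.

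For the record, the argument in \cite{DDF+} does not go via induction on $k$ at all. It works directly with a median ordering of $T$, partitions the vertices into consecutive blocks, and uses the fact (cf.\ \Cref{lem:mediandegrees} and \Cref{lem:mediansequence} in this paper) that forward densities between nearby blocks are at least $1/2$ to greedily chain together transitive $k$-sets $X_1\rightrightarrows X_2\rightrightarrows\cdots$, one in (roughly) every other block. The $2^{O(k)}$ loss comes from the K\H{o}v\'ari--S\'os--Tur\'an/dependent-random-choice step that produces each transitive $k$-set with many common out-neighbours in the next block, not from an inductive halving.
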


More precisely, for every positive integer $k$, every tournament on $n$ vertices contains a $k$-th power of a path on at least $kn/2^{4k+6}$ vertices. Moreover, it was shown in \cite{DDF+} that this is tight up to the constant in the exponent. 
One might be tempted to ask whether the same phenomenon holds for every acyclic digraph with bounded maximum degree. In other words, is it the case that, for any positive integer $d$, there is a constant $C(d)>0$ such that for any acyclic digraph $D$ on $n$ vertices with maximum degree $d$, every tournament on at least $C(d)n$ vertices contains a copy of $D$? 
Very recently Fox, He and Widgerson~\cite{F21} gave a negative answer to this question. Indeed, they showed that for all $\Delta\ge2$ and every sufficiently large $n$, there is an acyclic digraph $D$ on $n$ vertices with maximum degree $\Delta$ for which there are tournaments on at least $n^{\Omega(\Delta^{2/3-o(1)})}$ vertices that do not contain any copy of $D$. 

In this paper, we build some tools for finding powers of paths and cycles in tournaments and use them to prove two further results. We first extend \Cref{thm:tournamentpath}, by showing that every tournament can be partitioned into $2^{O(k)}$ tournaments each of which contains the $k$-th power of a Hamilton path.  We then consider powers of cycles, showing that if a tournament is far from transitive then it must contain the $k$th power of a long cycle; we give bounds that are essentially tight up to the implicit constants.  These results are discussed in the next two subsections.

\subsection{Partitions into powers of paths}

A common theme in combinatorics is  the problem of partitioning the vertex set of a graph into a bounded number of pieces, each satisfying certain properties.
A famous problem in this area is Lehel's Conjecture, which states that any graph has a vertex partition into two parts, where one forms a cycle and the other forms an anticycle (a cycle in the complement);
equivalently, any $2$-edge-coloured graph has a vertex bipartition into two monochromatic cycles of distinct colours
This was confirmed for large enough graphs by \L uczak, R\"{o}dl, and Szemer\'edi~\cite{LRS98} and for all graphs by Bessy and Thomass\'e~\cite{BT10}. 

Similar questions arise for colourings with more colours.  An influential result of Erd\H{o}s, Gy\'arf\'as and Pyber~\cite{EGP91} shows that any $r$-edge-coloured graph can be partitioned into at most $O(r^2\log r)$ monochromatic copies of a cycle. 
Recently Bustamante, Corsten, Frankl, Pokrovskiy and Skokan~\cite{BCFPS20} extended this to powers of cycles, proving that that for all natural numbers $k$ and $r$, the vertices of every $r$-edge-coloured complete graph can be partitioned into a bounded number of $k$-th powers of cycles. (We refer the reader to the survey of Gy\'arf\'as~\cite{G16}, for many other problems dealing with partitions and covers of finitely edge-coloured graphs.)

In light of these results it is very natural to ask whether every tournament has a finite partition of the vertex set into $k$-th powers of paths. In our first result, we answer this question.

\begin{theorem} \label{thm:partitioning}
Every $n$-vertex tournament $T$ can be partitioned into at most $2^{10^5k}$ vertex-disjoint $k$-th powers of directed paths.
\end{theorem}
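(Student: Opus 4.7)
The starting point is \Cref{thm:tournamentpath}, which provides a $k$-th power of a path on $\Omega(n/2^{(2+o(1))k})$ vertices in any $n$-vertex tournament. Iterating this greedily---pull out a long $k$-th power of a path, remove it, and recurse on the remainder---produces a partition of size $O(2^{O(k)}\log n)$, since each extraction removes only a $1/2^{O(k)}$ fraction of what remains. The central task is to eliminate this $\log n$ factor.

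To do this, the plan is to combine greedy extraction with an absorption argument. After pulling out $N=2^{\Theta(k)}$ disjoint $k$-th powers of paths $P_1,\dots,P_N$ from $T$ (each produced by \Cref{thm:tournamentpath} applied to what remains at the time), let $R$ denote the leftover vertex set. A vertex $v\in R$ can be inserted between $p_j$ and $p_{j+1}$ of $P_i=(p_1,\dots,p_{m_i})$ while preserving the $k$-th power of a path structure provided that $p_{j-k+1},\dots,p_j\to v$ and $v\to p_{j+1},\dots,p_{j+k}$. I would first show that the paths can be chosen (e.g.\ with a random refinement of \Cref{thm:tournamentpath}, or by pulling them from random subsets of $T$) so that for every $v\in R$ many such absorption positions exist across the $P_i$'s. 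A Hall-type matching or a balanced greedy assignment then distributes almost all of $R$ into the existing paths, leaving a small residue $R'$ partitioned into $|R'|$ singleton paths, for a total of $N+|R'|$ parts.

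The main obstacle is quantifying the absorption step: we need the unabsorbed set $R'$ to satisfy $|R'|\le 2^{O(k)}$ in absolute terms, rather than the weaker $|R'|\le n/2^{O(k)}$, which would only permit recursion and thus reintroduce a $\log n$ factor. Bridging this gap likely requires structural input beyond \Cref{thm:tournamentpath}. Natural candidates are (i) working with a median order of $T$ so that each extracted path contains many ``typical'' windows where generic external vertices fit, and (ii) a dichotomy between tournaments close to transitive (where a large transitive subtournament is itself a $k$-th power of a path and serves as a highly flexible absorbing backbone) and those far from transitive (where the $k$-th power of a cycle result announced in the abstract can be invoked to produce a long structure absorbing a substantial fraction of vertices in a single step). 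In either case, careful bookkeeping should convert the naive depth-$\Theta(\log n)$ recursion into a bounded number of rounds, yielding the claimed $2^{10^5 k}$ bound.
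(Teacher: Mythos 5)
Your plan correctly identifies the central difficulty (killing the $\log n$ factor from naive greedy extraction) and correctly guesses that an absorption argument is needed, but the absorption mechanism you propose is not the one that works, and the crucial structural lemma that makes the argument close is missing.

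The specific gap is in the absorption step. You propose to extract paths $P_1,\dots,P_N$ via \Cref{thm:tournamentpath} and then insert each leftover vertex $v$ into some window of some $P_i$, which requires $v$ to dominate $k$ consecutive path vertices and be dominated by the preceding $k$. You acknowledge you would need the paths to be chosen so that each $v\in R$ has many such windows, and you suggest a random refinement or a transitive/\intrans dichotomy (possibly invoking \Cref{thm:cycles}). None of these candidates is worked out, and it is far from clear they can be: after pulling out paths greedily from an arbitrary tournament there is no reason the residue should have good windows, and the cycle theorem says nothing about partitioning. The paper's proof reverses the logic entirely. It first defines a bespoke structure, a \emph{$k$-absorber}: a cycle power together with an absorbing part $Q$ of size $2^{10k}$, engineered (\Cref{lem:absorber1cover}) so that \emph{any} transitive $2k$-subset of $Q$ can serve as either endpoint of a $k$-th power of a Hamilton path through the absorber. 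This endpoint flexibility is what lets a family of vertex-disjoint absorbers be chained into a single $k$-th power of a path (\Cref{lem:absorbers}). The paper takes a \emph{maximal} family of disjoint absorbers, covers it with one path power, and then the real work is on the remainder $T'$: \Cref{lem:absorbfree} shows that if, in a median ordering of $T'$, one had two transitive $8k$-sets $B\subs A_i$, $B'\subs A_{i'}$ with $i'\ge i+80$ and $B'\cplt B$, then $T'$ would contain a $k$-absorber, contradicting maximality. The absence of such long backward transitive pairs is exactly the hypothesis of \Cref{lem:transsequence}, which then builds the forward chains $X_1\cplt X_{81}\cplt\cdots$ that cover $T'$ efficiently with boundedly many path powers.

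So the missing idea is not ``absorb into windows of greedily-extracted paths'' but rather the dichotomy ``either $T$ contains a $k$-absorber, or its median ordering is so constrained that \Cref{lem:transsequence} applies directly.'' Without this, your recursion has no termination mechanism that avoids the $\log n$ loss, and the sketch does not constitute a proof.
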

We remark that the bound above is essentially tight, up to the implied constant. To see this, let $T_k$ be a tournament on $2^{k/2}$ vertices which does not contain a transitive tournament on $k$ vertices, and let $T$ be a tournament consisting of the disjoint union of $n/2^{k/2}$ copies of $T_k$, where all edges between the copies are oriented from left to right. It is easy to see that $T$ does not contain a $k$-th power of a path of length greater than $kn/2^{k/2}$. 

\subsection{Powers of cycles in $\eps$-intransitive tournaments}

Generalizing the previous results about path powers to cycle powers is not possible. For example, the transitive tournament does not contain any directed cycle at all. One can only say something about cycles in tournament by making further structural assumptions.

An old result of Bollob\'as and H\"aggkvist \cite{BH90} says that for every $k$ and $\eps>0$, any tournament on sufficiently many vertices with minimum semi-degree at least $(1/4+\eps)n$ contains a $k$-th power of a Hamilton cycle and this is tight up to the $o(1)$ error term. Recently, Draganic, Munh\'a Correia, and Sudakov \cite{DMS} were able to find an almost tight bound for the error term. 

We complement these results by showing that much milder assumptions are sufficient for the existence of long (linear-length) $k$-th powers of cycles in tournaments.

We say that an $n$-vertex tournament is \emph{\eps-\intrans} if no matter how we order its vertices, there are always at least $\eps n^2$ backward edges. 
This is a way to measure how far the tournament is from being transitive. Note that a tournament cannot be $\gamma$-\intrans for $\gamma> 1/4$, because for any vertex ordering $\tau$, either $\tau$ or its reverse induces fewer than $n^2/4$ backward edges.
This definition turns out to be quite important and is a sort equivalent of edge density for tournaments. Indeed, as a by-product of a result of Chung and Graham~\cite{CG91}, it follows that for any tournament $H$, there is $\eps(H)\geq 0$ such that every sufficiently large tournament $T$ which is $(\eps(H)+o(1))$-intransitive must contain a copy of $H$. Fox and Sudakov~\cite{FS08} showed that $\eps(F)=0$ works for any transitive blow-up $F$ of a directed triangle. 

We prove the following result. 

\begin{theorem} \label{thm:cycles}
Let $0<\eps<1/4$, then every $\eps$-\intrans tournament on $n\ge \eps^{-10^5 k}$ vertices contains the $k$-th power of a cycle of length at least $\eps n/1500$.
\end{theorem}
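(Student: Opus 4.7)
My plan is to reduce the problem to finding a $k$-th power of a Hamilton cycle in a \emph{cyclic-triangle blow-up}: three pairwise disjoint vertex sets $A,B,C$ of size $\Omega(\eps n)$ each, with (nearly) all edges between them forming a directed $3$-cycle ($A\to B$, $B\to C$, $C\to A$). Once such a blow-up is in hand, the induced sub-tournament on $A\cup B\cup C$ has minimum semi-degree close to $\tfrac13$ of its total size, which is comfortably above the $\tfrac14$ Bollob\'as--H\"aggkvist threshold, so one can apply the Bollob\'as--H\"aggkvist theorem (or its recent quantitative refinement by Dragani\'c, Munh\'a Correia, and Sudakov) to find a $k$-th power of a Hamilton cycle of length $|A|+|B|+|C|\ge \eps n/500$ inside the blow-up, comfortably exceeding the required $\eps n/1500$.

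The bulk of the work is to produce such a blow-up. The starting point is a supersaturation fact: an $\eps$-intransitive tournament contains $\Omega(\eps n^3)$ directed $3$-cycles --- this can be obtained by counting cyclic triples that use a backward edge in a median order, coupled with the $\ge \eps n^2$ backward edges guaranteed by intransitivity. With this abundance of cyclic triples, a density increment / regularity argument extracts three large disjoint vertex sets whose pairwise densities approximate a directed $3$-cycle. A short cleaning step then removes the small set of vertices with atypical cross-edge behaviour, producing the clean sides $A,B,C$ of size $\Omega(\eps n)$. (For the BH step to go through, densities of $7/8$ in each cyclic direction suffice, so the cleaning can afford to discard a constant fraction of each side.)

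The main obstacle is the first step: obtaining a cyclic-triangle blow-up with sides \emph{linear} in $\eps n$ (rather than merely polynomially small). This requires a careful balance between partition granularity (determining the side sizes) and cleanliness (needed for Bollob\'as--H\"aggkvist to apply). The lower bound $n\ge \eps^{-10^5 k}$ is precisely calibrated so that the blow-up is both large enough (the Hamilton-cycle-power theorem's quantitative threshold $n_0(k)$ is polynomial in $\eps^{-k}$) and clean enough (so that the cleaning step retains $\Omega(\eps n)$ vertices per side). The tools built earlier in the paper for finding powers of paths (in particular \Cref{thm:tournamentpath}) could provide an alternative route for the last step, namely by finding long $k$-th powers of paths inside each side and concatenating via the cyclic cross-edges, though this introduces additional $2^{O(k)}$ factors that would need to be absorbed.
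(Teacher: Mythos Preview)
Your reduction has a genuine gap at its very first step: an $\eps$-intransitive tournament need \emph{not} contain a cyclic-triangle blow-up with sides of order $\Omega(\eps n)$ and pairwise densities anywhere near $7/8$. The paper's own tightness construction is a counterexample. Take $1/(4\eps)$ random tournaments $R_1,\dots,R_{1/(4\eps)}$, each on $m=4\eps n$ vertices, and orient every cross edge from $R_i$ to $R_j$ for $i<j$. This tournament is $(\tfrac14-o(1))\cdot\frac{1}{1/(4\eps)}\approx\eps$-intransitive. But in a random tournament on $m$ vertices, any two disjoint sets of size $\gg \log m$ have density $\tfrac12+o(1)$ in each direction (Chernoff plus a union bound), so no single $R_i$ contains a $7/8$-dense one-way pair of linear size; and since the cross edges between different $R_i$ form a transitive pattern, you cannot manufacture a cyclic pattern by spreading $A,B,C$ across several blocks either. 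Hence there are no sets $A,B,C$ of size $\Omega(\eps n)$ with $d(A,B),d(B,C),d(C,A)\ge 7/8$, and your cleaning/Bollob\'as--H\"aggkvist plan cannot get started. (Incidentally, your supersaturation count is also off: in this same construction the number of cyclic triangles is $\Theta(\eps^2 n^3)$, not $\Theta(\eps n^3)$.)

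The paper's proof takes a different route that sidesteps this obstruction. It first runs a density-increment argument (\Cref{lem:densityincrement}, \Cref{cor:longbackedges}) to pass to a subtournament $\tT$ with $\teps\tn\ge \eps n/5$ in which a median ordering has $\Omega(\teps^2\tn^2)$ backward edges each of length $\Omega(\teps\tn)$. It then finds two well-separated intervals $A_a,A_b$ with many backward edges between them, and builds the cycle power \emph{directly}: \Cref{cor:transturan} extracts many disjoint transitive pairs $Z'_i\cplt Z_i$ from the backward edges, \Cref{lem:mediansequence} extends each $Z_i$ into a long forward path-blowup picking up $\Omega(\teps\tn)$ vertices, and \Cref{cor:medtranssequence} stitches the pieces into a single cycle. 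No global semidegree condition is invoked; the argument only ever needs local density between consecutive intervals of the median order, which is always available.
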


Furthermore, in \Cref{seccycle}, we show both bounds are essentially tight.

The rest of the paper is organized as follows.  In the next section, we develop some machinery that we shall need for our proofs. \Cref{thm:partitioning}, on partitioning into powers of cycles, is proved in \Cref{secpartition}; and \Cref{thm:cycles}, on finding powers of long cycles, is proved in \Cref{seccycle}.

We note that we have not tried to optimize absolute constants in our results.

\section{Tools} \label{sec:tools}

A tournament is transitive if its vertices can be ordered so that every edge is oriented from its smaller endpoint to its larger endpoint. Note that every subtournament of a transitive tournament is also transitive.

We use $[n]$ to denote the set of integers $\{1,\dots,n\}$.
$T$ is a tournament. For two vertex sets $A,B$ in $T$, $e(A,B)$ denotes the number of edges from $A$ to $B$, and $d(A,B) = \frac{e(A,B)}{|A||B|}$ denotes the density of such edges. Of course, $d(A,B)=1$ if and only if $A$ and $B$ are disjoint and there is an edge from every vertex in $A$ to every vertex in $B$. We will denote this by $A\cplt B$.

For convenience, our notion of cycles will include singleton vertices and edges as degenerate cases on one or two vertices. We will also drop floor and ceiling signs when they are not essential.

\subsection{Extremal lemmas}

Let us start with two well-known facts that we will need for the proof. The first is a basic property of tournaments.

\begin{proposition} \label{lem:transitive}
Every tournament on at least $2^k$ vertices contains a $k$-vertex transitive subtournament.
\end{proposition}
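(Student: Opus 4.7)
The plan is a standard induction on $k$, following the classical Erd\H{o}s--Moser argument. The base case $k=1$ is trivial: any single vertex forms a transitive subtournament, and $2^1=2$ vertices certainly suffice (even one does).

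For the inductive step, suppose the statement holds for $k-1$, and let $T$ be a tournament on at least $2^k$ vertices. Pick an arbitrary vertex $v$. By pigeonhole, since $v$ has $|T|-1 \geq 2^k-1$ other vertices split between its out-neighborhood $N^+(v)$ and in-neighborhood $N^-(v)$, one of these sets has size at least $\lceil (2^k-1)/2\rceil = 2^{k-1}$. By the inductive hypothesis, this set (viewed as a subtournament of $T$) contains a transitive subtournament on $k-1$ vertices. Prepending $v$ (in the out-neighborhood case) or appending it (in the in-neighborhood case) to the resulting transitive ordering yields a transitive subtournament on $k$ vertices, as required.

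There is no real obstacle here: the only thing to check carefully is the pigeonhole bound so that the induction actually goes through with the threshold $2^k$ rather than something larger. Since $|T|-1 \geq 2^k-1$ and the two neighborhoods partition $V(T)\setminus\{v\}$, the larger of them has at least $\lceil(2^k-1)/2\rceil=2^{k-1}$ vertices, which is exactly the bound the induction needs.
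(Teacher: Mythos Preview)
Your proof is correct; it is the classical Erd\H{o}s--Moser induction. The paper itself does not prove this proposition at all --- it is listed as a ``well-known fact'' and stated without argument --- so there is nothing to compare against, and your write-up would serve perfectly well as a justification.
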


We will also need the following simple observation about edge densities.

\begin{proposition} \label{lem:mindegrees}
Let $G=(A\cup B, E)$ be a bipartite graph with $|E| = \beta|A||B|$ edges for some $0<\beta\le 1$. Then for every $\eps\ge 0$,  $A$ contains at least $(\beta-\eps)|A|$ vertices of degree at least $\eps|B|$.
\end{proposition}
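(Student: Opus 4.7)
The plan is to use a direct averaging/counting argument, bounding the total edge count in two ways. Write $A'' \subseteq A$ for the set of vertices with degree at least $\eps|B|$ and $A' = A \setminus A''$ for those with degree strictly less than $\eps|B|$. The goal is to show $|A''| \ge (\beta - \eps)|A|$.

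Since vertices in $A''$ contribute at most $|B|$ edges each and vertices in $A'$ contribute strictly fewer than $\eps|B|$ edges each, I would estimate
\[
\beta |A||B| \;=\; e(A,B) \;\le\; |A''|\cdot|B| \;+\; |A'|\cdot\eps|B| \;\le\; |A''|\cdot|B| \;+\; |A|\cdot\eps|B|.
\]
Dividing through by $|B|$ and rearranging yields $|A''| \ge (\beta - \eps)|A|$, which is exactly the desired conclusion. Note that if $\beta < \eps$ the bound is vacuous (the right-hand side is non-positive), so there is nothing to check in that regime.

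I do not anticipate any real obstacle here: this is a standard first-moment/degree-counting observation, and the slightly crude step of replacing $|A'|$ by $|A|$ in the second term is what gives the clean bound $(\beta-\eps)|A|$ rather than the marginally stronger $\frac{\beta-\eps}{1-\eps}|A|$. Since the paper states it is not trying to optimize constants, the clean form is the right one to record.
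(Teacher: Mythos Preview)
Your argument is correct and is exactly the standard averaging proof one would expect; the paper itself states this proposition as a ``simple observation'' and does not give a proof, so there is nothing further to compare.
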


Next, we formulate a special case of the K\H{o}v\'ari-S\'os-Tur\'an theorem, which will be a key tool in our arguments. We give a short proof for completeness.

\begin{lemma} \label{lem:mindegturan}
Let $G=(A\cup B, E)$ be a bipartite graph such that for some $0<\beta\le 1/2$, every vertex in $A$ has at least $\beta|B|$ neighbours in $B$. If $|A|\ge k/\beta$, then $A$ contains a subset $X$ of size $k$ with at least $\beta^{4k}|B|$ common neighbours in $B$.
\end{lemma}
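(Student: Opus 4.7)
The plan is to follow the standard Kővári--Sós--Turán double-counting argument, with the bound $\beta \le 1/2$ used only at the very end to absorb loss factors into the generous exponent $4k$.

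First, I would count pairs $(X, b)$ where $X$ is a $k$-subset of $A$ and $b \in B$ is a common neighbour of $X$. For fixed $b$, the number of such $X$ equals $\binom{d(b)}{k}$, where $d(b)$ denotes the number of neighbours of $b$ in $A$. By convexity of $x \mapsto \binom{x}{k}$ (Jensen's inequality applied to the integer-valued binomial, extended continuously), the total count is at least $|B|\binom{\bar d}{k}$, where $\bar d = \frac{1}{|B|}\sum_{b \in B} d(b) = \frac{e(G)}{|B|}$. The minimum-degree hypothesis forces $e(G) \ge \beta|A||B|$, so $\bar d \ge \beta|A|$ and the number of pairs is at least $|B|\binom{\beta|A|}{k}$. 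Averaging over the $\binom{|A|}{k}$ possible $X$'s then yields some $X \subseteq A$ with
\[
|N(X)| \ge |B| \cdot \frac{\binom{\beta|A|}{k}}{\binom{|A|}{k}}.
\]

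The remaining task is to check that this ratio is at least $\beta^{4k}$. I would use the two elementary estimates $\binom{n}{k} \ge (n/k)^k$ and $\binom{n}{k} \le (en/k)^k$; the lower bound applies to $\binom{\beta|A|}{k}$ and is valid because the hypothesis $|A| \ge k/\beta$ gives $\beta|A| \ge k$. Together these yield the ratio $\ge (\beta/e)^k$. To absorb the $e^k$ factor, it suffices that $(\beta/e)^k \ge \beta^{4k}$, i.e.\ $\beta^3 \le 1/e$, and since $\beta \le 1/2 < e^{-1/3}$ this holds comfortably.

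I do not anticipate a real obstacle: the hypothesis $\beta \le 1/2$ is generous, so the exponent $4k$ in the conclusion is not tight, and any reasonable tail bound on binomial ratios would work. The only minor care needed is the step invoking $\binom{\beta|A|}{k} \ge (\beta|A|/k)^k$, which requires $\beta|A|$ to be at least $k$; this is precisely why the hypothesis $|A| \ge k/\beta$ is stated. (If $\beta|A|$ is not an integer, one can interpret $\binom{\beta|A|}{k}$ via the standard falling-factorial extension, or equivalently work with $\binom{\lfloor\beta|A|\rfloor}{k}$ at a negligible cost absorbed again into the slack.)
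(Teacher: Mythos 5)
Your proof is correct and uses essentially the same K\H{o}v\'ari--S\'os--Tur\'an double-counting argument as the paper. The only minor technical difference is that the paper first shrinks $A$ to size $\lceil k/\beta\rceil$ so that the pair count is trivially at least $|B|$ and $\binom{|A|}{k}$ can be bounded directly by $\beta^{-4k}$, whereas you keep $A$ at full size and bound the ratio $\binom{\beta|A|}{k}\big/\binom{|A|}{k}\ge(\beta/e)^k\ge\beta^{4k}$ uniformly in $|A|$; both reduce to the same cheap numerical check that $\beta\le 1/2$ provides enough slack.
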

\begin{proof}
We may assume that $|A| = \ceil{k/\beta}$. A vertex $v\in B$ sees $\binom{d(v)}{k}$ different $k$-subsets of $A$ in its neighbourhood. This gives a total of 
\[ \sum_{v\in B} \binom{d(v)}{k} \ge |B| \binom{\sum d(v) / |B|}{k} \ge |B| \binom{\beta |A|}{k} \ge |B| \]
$k$-sets over all vertices of $B$, where we used Jensen's inequality, and that $\sum d(v) \ge \beta|A||B| \ge k|B|$. But there are only $\binom{|A|}{k} \le \binom{2k/\beta}{k}\le (\frac{2k/\beta}{k/e})^k \le \beta^{-4k}$ different $k$-sets in $A$, so one of them must have at least $\beta^{4k}|B|$ common neighbours in $|B|$.
\end{proof}

We will also need the following strengthening for tournaments, which comes as a simple application of the dependent random choice method.
\begin{lemma} \label{lem:transturan}
Let $A,B$ be disjoint sets in a tournament. If $d(A,B) \ge \beta$ for some $0<\beta \le 1/2$, and $|A|,|B|\ge \beta^{-5k}$, then there are subsets $X\subs A$ and $Y\subs B$ of size $|X|\ge \beta^{4k}|A|$ and $|Y|=k$ such that $Y$ induces a transitive tournament, and $X\cplt Y$.
\end{lemma}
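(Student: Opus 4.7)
The plan is to apply dependent random choice. Call a $k$-subset $Y \subs B$ \emph{bad} if its common in-neighborhood
\[
    N^-(Y) := \{a \in A : a \to y \text{ for all } y \in Y\}
\]
has fewer than $\beta^{4k}|A|$ vertices, and \emph{good} otherwise. The goal is to produce a good $Y$ that additionally induces a transitive tournament; then $X := N^-(Y) \cap A$ automatically satisfies $|X| \ge \beta^{4k}|A|$ and $X \cplt Y$.

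By averaging, I may first pass to a subset $B' \subs B$ with $|B'| = \beta^{-5k}$ and $d(A,B') \ge \beta$ (a uniformly random subset of $B$ of this size has expected density at least $d(A,B) \ge \beta$), and relabel $B := B'$. Then sample a multiset $S \subs A$ of size $t := 4k$ uniformly with replacement, and set $U := \{b \in B : s \to b \text{ for all } s \in S\}$. By Jensen's inequality applied to $x \mapsto x^t$,
\[
    \EE|U| = \sum_{b \in B}\left(\frac{|N^-(b) \cap A|}{|A|}\right)^t \ge |B|\beta^t = \beta^{-k},
\]
while for any fixed bad $k$-subset $Y$, $\mathbb{P}[Y \subs U] = (|N^-(Y) \cap A|/|A|)^t < \beta^{4kt}$, so letting $Z$ denote the number of bad $k$-subsets contained in $U$,
\[
    \EE Z \le \binom{|B|}{k}\beta^{4kt} \le |B|^k\beta^{16k^2} = \beta^{11k^2}.
\]
Because $\beta \le 1/2$, we have $\beta^{-k} \ge 2^k$ and $\beta^{11k^2} \le 1/2$, so $\EE[|U|-Z] \ge 2^k - 1/2$. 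Since $|U|-Z$ is integer-valued, some outcome of $S$ achieves $|U| - Z \ge 2^k$.

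Fix such an $S$, and delete one vertex of $U$ from each bad $k$-subset it contains; the resulting set $U' \subs U$ has $|U'| \ge 2^k$ and no bad $k$-subsets. By Proposition~\ref{lem:transitive}, $U'$ contains a transitive $k$-vertex subtournament $Y$, which is necessarily good, so $X := N^-(Y) \cap A$ has size at least $\beta^{4k}|A|$ and satisfies $X \cplt Y$ by definition, as required.

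The central step is the balancing of exponents in the DRC computation: the choice $t = 4k$ (given $|B| \ge \beta^{-5k}$) keeps $|B|\beta^t \ge 2^k$, just large enough to extract a transitive $k$-set via Proposition~\ref{lem:transitive}, while the bound $\binom{|B|}{k}\beta^{4kt} = \beta^{11k^2}$ on the expected number of surviving bad $k$-subsets is negligible. This is where the $\beta^{-5k}$ hypothesis on $|A|$ and $|B|$ is essential; a direct application of Lemma~\ref{lem:mindegturan} to locate a $k$-subset and then hope it is transitive is too wasteful, since one would need to enlarge $Y$ to $2^k$ vertices and the common in-neighborhood would shrink by a factor of $\beta^{4 \cdot 2^k}$.
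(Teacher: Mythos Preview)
Your proof is correct and follows essentially the same dependent random choice argument as the paper. The only cosmetic difference is in parameter bookkeeping: the paper chooses the sample size $s=\lfloor\tfrac12\log_{1/\beta}|B|\rfloor$ adaptively so that the computation works for any $|B|\ge\beta^{-5k}$, whereas you first trim $B$ down to size $\beta^{-5k}$ by averaging and then use the fixed sample size $t=4k$; both routes yield $\EE[|U|]\ge 2^k$ and $\EE[Z]\le 1$, after which the deletion step and \Cref{lem:transitive} finish the argument identically.
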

\begin{proof}
Let $S$ be a random set of $s=\floor{\frac{1}{2}\log_{1/\beta} |B|}$ independently and uniformly sampled vertices in $A$, and let $T\subs B$ be the set of common outneighbours of $S$. Then $\EE[|T|]=\sum_{v\in B} d(A,v)^s \ge \beta^s |B| \ge |B|^{1/2}$ by Jensen's inequality. Let $Z$ be the number of $k$-subsets in $T$ with fewer than $\beta^{4k}|A|$ common inneighbours in $A$. The probability that a given $k$-subset $Q\subseteq B$ with $\gamma|A|$ common inneighbours in $A$ satisfies $Q\subs T$ is $\gamma^s$, so we have $\EE[Z] = \binom{|B|}{k}\beta^{4k\cdot s} \le (\beta^{4s}|B|)^k \le 1$, as $4s\geq \log_{1/\beta}(|B|)$. Hence $\EE[|T|-Z]\ge |B|^{1/2} - 1$. Let us fix a random sample where $|T|-Z\ge |B|^{1/2}-1$.

By deleting a vertex of each $k$-subset counted by $Z$ from the set $T$, we obtain a subset $W\subs T$ of size at least $|B|^{1/2}-1$ such that all $k$-subsets in $W$ have at least $\beta^{4k}|A|$ common inneighbours. As $|W|\ge 2^{2k}$, we can use \Cref{lem:transitive} to find a $k$-set $Y\subs W$ that induces a transitive subtournament. We can choose $X$ to be the common inneighbourhood of $Y$.
\end{proof}

\begin{corollary} \label{cor:transturan}
Let $A,B$ be disjoint sets in a tournament. If $d(A,B) \ge \beta$ for some $0<\beta \le 1/2$, and $|A|,|B|\ge \beta^{-5k}$, then there are sets $X\subs A$ and $Y\subs B$ that induce transitive tournaments of size $k$ and satisfy $X\cplt Y$.
\end{corollary}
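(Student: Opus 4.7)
The plan is to derive this corollary directly from \Cref{lem:transturan} together with \Cref{lem:transitive}. The lemma already hands us a transitive $k$-set $Y\subs B$ with $X_0\cplt Y$ for some large set $X_0\subs A$, so the only remaining task is to locate a transitive $k$-set inside $X_0$.

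Concretely, I would first apply \Cref{lem:transturan} with the same parameters $\beta$ and $k$ to the pair $(A,B)$: since $d(A,B)\ge \beta$ and $|A|,|B|\ge \beta^{-5k}$, this yields a set $X_0\subs A$ with $|X_0|\ge \beta^{4k}|A|$ and a transitive $k$-set $Y\subs B$ with $X_0\cplt Y$. Because $|A|\ge \beta^{-5k}$ and $\beta\le 1/2$, we have
\[
|X_0|\ge \beta^{4k}\cdot \beta^{-5k}=\beta^{-k}\ge 2^k.
\]
Now \Cref{lem:transitive} applied to the subtournament induced on $X_0$ produces a transitive $k$-set $X\subs X_0$. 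Since $X\subs X_0$ and $X_0\cplt Y$, we still have $X\cplt Y$, and both $X$ and $Y$ induce transitive tournaments of size $k$, as required.

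The argument is essentially immediate given the tools already proven, so there is no serious obstacle; the only point to verify is the numerical inequality $\beta^{4k}|A|\ge 2^k$, which is why the hypothesis $\beta\le 1/2$ (and hence $\beta^{-1}\ge 2$) is needed in combination with the bound $|A|\ge \beta^{-5k}$.
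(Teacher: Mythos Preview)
Your proof is correct and follows exactly the same approach as the paper: apply \Cref{lem:transturan} to obtain the transitive $k$-set $Y\subs B$ and the large set $X_0\subs A$ with $X_0\cplt Y$, verify $|X_0|\ge \beta^{4k}|A|\ge \beta^{-k}\ge 2^k$, and then apply \Cref{lem:transitive} inside $X_0$. The paper's proof is just a one-line compression of what you wrote.
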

\begin{proof}
This is immediate from \Cref{lem:transturan} noting $\beta^{4k}|A| \ge 2^k$ and applying \Cref{lem:transitive}.
\end{proof}

Our final tool in this section describes a sufficient condition when we can repeatedly apply the previous lemmas to construct a sequence of transitive tournaments.

\begin{lemma} \label{lem:transsequence}
Let $A_1,\dots, A_t$ be disjoint vertex sets of size at least $2^{10k}$ in a tournament, and suppose there is no $i\in [t-1]$ and sets $B\subs A_i$ and $B'\subs A_{i+1}$ such that $B$ and $B'$ induce transitive tournaments of size $k$, and $B'\cplt B$.
Then there are sets $X_i\subs A_i$ of size $k$ such that each $X_i$ induces a transitive subtournament, and $X_1\cplt \cdots \cplt X_t$.
\end{lemma}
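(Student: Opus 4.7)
I will construct the $X_i$'s from right to left, repeatedly applying \Cref{lem:transturan}, and finishing at $i=1$ with \Cref{lem:transitive}. The key preliminary claim, which I prove by contradiction from the hypothesis, is a \emph{robust forward density} statement: for every $i \in [t-1]$ and every $Z \subseteq A_{i+1}$ of size at least $2^{5k}$, we have $d(A_i, Z) > 1/2$. Indeed, if instead $d(Z, A_i) \geq 1/2$, then since $|A_i|, |Z| \geq 2^{5k} = (1/2)^{-5k}$, \Cref{cor:transturan} with $\beta = 1/2$ supplies transitive $k$-subsets $B' \subseteq Z \subseteq A_{i+1}$ and $B \subseteq A_i$ with $B' \cplt B$, the very configuration forbidden by hypothesis.

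Set $Z_t := A_t$ and iterate for $i = t, t-1, \ldots, 2$: given $Z_i \subseteq A_i$ with $|Z_i| \geq 2^{5k}$, the claim yields $d(A_{i-1}, Z_i) > 1/2$, so \Cref{lem:transturan} (with $A := A_{i-1}$, $B := Z_i$, $\beta := 1/2$, noting both sides have size $\geq 2^{5k} = \beta^{-5k}$) produces a transitive $k$-subset $X_i \subseteq Z_i$ and a set $Z_{i-1} \subseteq A_{i-1}$ of size at least $2^{-4k}|A_{i-1}| \geq 2^{6k}$ with $Z_{i-1} \cplt X_i$. Crucially $|Z_{i-1}| \geq 2^{5k}$, so the invariant persists and the iteration proceeds. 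After it terminates, $|Z_1| \geq 2^{6k} \geq 2^k$, so \Cref{lem:transitive} supplies a transitive $k$-subset $X_1 \subseteq Z_1$. For each $i \geq 2$ one has $X_{i-1} \subseteq Z_{i-1}$ and $Z_{i-1} \cplt X_i$, hence $X_{i-1} \cplt X_i$; combined with $X_1 \subseteq Z_1 \cplt X_2$ this produces the desired chain $X_1 \cplt X_2 \cplt \cdots \cplt X_t$.

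The main point to verify is that the invariant $|Z_i| \geq 2^{5k}$ survives every iteration, and this is exactly what dictates the hypothesis $|A_i| \geq 2^{10k}$: the shrinkage factor $\beta^{4k} = 2^{-4k}$ from \Cref{lem:transturan} must leave enough room above the threshold $\beta^{-5k} = 2^{5k}$ that is required both by \Cref{cor:transturan} (to re-derive the density claim against the shrunken set at the next step) and by \Cref{lem:transturan} itself. Once this bookkeeping is in place, the argument is essentially a direct application of the machinery of the previous section.
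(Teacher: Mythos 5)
Your proposal is correct and is essentially the paper's proof, merely phrased iteratively (right-to-left with a maintained invariant $|Z_i|\ge 2^{5k}$) rather than by induction on $t$ with a weakened hypothesis on the last set; the core ingredients — using \Cref{cor:transturan} to rule out backward density $\ge 1/2$, then \Cref{lem:transturan} with $\beta=1/2$ to peel off a transitive $k$-set while shrinking by a factor $2^{-4k}$ — are identical.
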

\begin{proof}
We proceed by induction on $t$ under the weaker assumption that $|A_t| \ge 2^{6k}$. The $t=1$ case easily follows from \Cref{lem:transitive}, so we assume $t\ge 2$.

If $d(A_t,A_{t-1})\ge 1/2$, then we can apply \Cref{cor:transturan} to find $k$-sets $B\subs A_{t-1}$ and $B'\subs A_t$ that induce transitive tournaments and satisfy $B'\cplt B$, contradicting our assumption.

So $d(A_t,A_{t-1})\le 1/2$, i.e., $d(A_{t-1},A_t)\ge 1/2$, and we can apply \Cref{lem:transturan} to find a $k$-subset $X_t\subs A_t$ that induces a transitive tournament, and another subset $A'_{t-1} \subs A_{t-1}$ of size $|A'_{t-1}|\ge |A_{t-1}|/2^{4k} \ge 2^{6k}$ such that $A'_{t-1}\cplt X_t$. Applying the induction hypothesis to the sets $A_1,\dots, A_{t-2},A'_{t-1}$ yields the result.
\end{proof}

\subsection{Median orderings}

A \emph{median ordering} of a tournament $T$ is an ordering $v_1 \prec \dots \prec v_n$ of the vertices that maximizes the number of forward edges, i.e., edges of the form $v_iv_j$ with $i<j$. Studying such orderings have been very helpful in understanding the structure of tournaments. An interval of vertices with respect to a median ordering is a sequence $V[i,j] = \{v_i,\dots, v_j\}$ for some $i\le j$. For two vertex subsets $X,Y$ we write $X\prec Y$ to denote that $x\prec y$ for any $x\in X$ and $y\in Y$.

\begin{lemma} \label{lem:mediandegrees}
Suppose that in a median ordering of some tournament $T$, an interval is split into subintervals $A_0\prec \dots \prec A_t$ of size $m$ each. Then every vertex $v\in A_0$ has at least $\frac{t-2}{2} m$ outneighbours in $A=A_1\cup \dots \cup A_{t-1}$, and every vertex in $v\in A_t$ has at least $\frac{t-2}{2} m$ inneighbours in $A$. 
\end{lemma}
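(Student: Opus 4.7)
The plan is to exploit the standard local exchange property of median orderings: moving any vertex to a new position cannot increase the number of forward edges, and one application of this to a single vertex already gives a bound on its outneighbours (or inneighbours) in any suffix (resp.\ prefix) of its position.

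\textbf{Step 1: a uniform local property.} I would first show that if $v_1\prec\cdots\prec v_n$ is a median ordering, then for every $\ell$ and every $b\ge \ell$, $v_\ell$ has at least $(b-\ell)/2$ outneighbours in $V[\ell+1,b]$, and dually, for every $a\le \ell$ it has at least $(\ell-a)/2$ inneighbours in $V[a,\ell-1]$. The proof is a single swap: reinserting $v_\ell$ just past $v_b$ flips exactly the edges between $v_\ell$ and $V[\ell+1,b]$ in terms of being forward vs.\ backward, so the change in the number of forward edges equals $\mathrm{indeg}_{V[\ell+1,b]}(v_\ell)-\mathrm{outdeg}_{V[\ell+1,b]}(v_\ell)$. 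By maximality of the median ordering this is $\le 0$, which yields the claim; the inneighbour statement is symmetric.

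\textbf{Step 2: the main computation.} Let $p_i$ be the first position of $A_i$ in the ordering, so $p_{i+1}=p_i+m$ and $A=V[p_1,p_t-1]$. For $v=v_\ell\in A_0$, i.e.\ $p_0\le \ell\le p_1-1$, apply the outneighbour bound with $b=p_t-1$: at least $(p_t-1-\ell)/2$ outneighbours of $v$ lie in $V[\ell+1,p_t-1]$. Among these, at most $p_1-1-\ell\le m-1$ can lie in the remainder of $A_0$, so the outneighbours inside $A=V[p_1,p_t-1]$ number at least
\[
\frac{p_t-1-\ell}{2}-(p_1-1-\ell)\;=\;\frac{(t-1)m+1+\ell-p_1}{2}\;\ge\;\frac{(t-2)m+1}{2}\;\ge\;\frac{(t-2)m}{2},
\]
using $p_t-p_1=(t-1)m$ and $\ell\ge p_1-m$. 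The claim for $v\in A_t$ is completely symmetric: apply the inneighbour bound to the interval $V[p_1,\ell]$, then subtract the at most $m-1$ inneighbours of $v$ that lie inside $A_t$ itself.

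\textbf{Main obstacle.} There is essentially none once the local property of Step~1 is in hand; the content is a direct calculation. The only point requiring mild care is choosing the correct interval on which to invoke the exchange property (running all the way from $v$ to the far endpoint of $A_{t-1}$), so that the unavoidable loss of up to $m-1$ outneighbours located inside the rest of $A_0$ is absorbed by the slack in the half-length bound.
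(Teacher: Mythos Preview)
Your proof is correct and is essentially the same as the paper's: both move a single vertex $v\in A_0$ to the right end of $A_0\cup A$ and use the maximality of the median ordering to bound the out-degree of $v$ into $A$, absorbing the at most $m-1$ neighbours inside $A_0$. The only cosmetic difference is that you first isolate the general pointwise exchange inequality (Step~1) and then specialise, whereas the paper argues by contradiction directly for the vertex in question; the swap and the arithmetic are identical.
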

\begin{proof}
Note that $|A| = (t-1)m$, so if some $v\in A_0$ has fewer than $\frac{t-2}{2} m$ outneighbours in $A$, then $v$ has at least $m$ fewer outneighbours in $A$ than inneighbours. As $v$ has only $m-1$ neighbours in $A_0$, moving $v$ to the end of the interval $A_0 \cup A$ (so that $A_{t-1}\prec v \prec A_t$) is guaranteed to increase the number of forward edges. This contradicts our assumption on the ordering. The statement about $A_t$ can be proved analogously.
\end{proof}

Using this lemma, we can obtain an ordered variant of \Cref{lem:mindegturan}. It will be helpful to allow a set $F$ of `forbidden' vertices. 

\begin{lemma} \label{lem:medianturan}
Suppose that in a median ordering of some tournament $T$, an interval $A$ is split into three subintervals $A_0\cup A_1 \cup A_2$ of size $m$ each, and let $F\subs A$ be a set of at most $m/4$ forbidden vertices. Then every set $A'_0\subs A_0$ of size $8k$ contains a subset $X$ of size $k$ such that for some $i\in[2]$, $X$ has at least $m/2^{12k}$ common outneighbours in $A_i\sm F$.
\end{lemma}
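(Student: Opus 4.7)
The plan is to combine a median-ordering inequality (in the spirit of Lemma \ref{lem:mediandegrees}) with the K\H{o}v\'ari-S\'os-Tur\'an counting from Lemma \ref{lem:mindegturan}. The first step is to show that every $v\in A_0$ has at least $(m+1)/2$ outneighbours in $A_1\cup A_2$. I would establish this exactly as in the proof of Lemma \ref{lem:mediandegrees}: moving $v$ to sit just after $A_2$ affects only forward edges between $v$ and $(A_0\setminus\{v\})\cup A_1\cup A_2$, and the net change in forward edges equals $(c-a)+(d_1^--d_1^+)+(d_2^--d_2^+)$, where $a,c$ count out- and in-neighbours of $v$ inside $A_0\setminus\{v\}$ that lie after $v$, and $d_i^\pm$ are $v$'s out/in-degrees into $A_i$. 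Optimality of the median ordering forces this to be nonpositive, and since $a-c\le m-1$ we get $d_1^++d_2^+\ge (m+1)/2$.

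Subtracting at most $|F|\le m/4$ forbidden vertices, every $v\in A'_0$ has at least $m/4$ outneighbours in $(A_1\cup A_2)\setminus F$. Summing over $A'_0$ yields $e(A'_0,(A_1\cup A_2)\setminus F)\ge 2km$, and pigeonhole on $\{1,2\}$ produces some $i^*\in[2]$ with $e(A'_0,A_{i^*}\setminus F)\ge km$. Writing $B=A_{i^*}\setminus F$, we have $3m/4\le |B|\le m$, and setting $\beta=\frac{m}{8|B|}$ gives $\beta\in[1/8,1/6]\subseteq(0,1/2]$. By construction $d(A'_0,B)\ge\beta$ and $|A'_0|=8k\ge k/\beta$, so the hypotheses of the K\H{o}v\'ari-S\'os-Tur\'an step from the proof of Lemma \ref{lem:mindegturan} are met — that proof only uses an average-degree (equivalently, density) bound on $A$, so the density hypothesis is sufficient. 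The argument produces a $k$-subset $X\subseteq A'_0$ with at least $\beta^{4k}|B|$ common outneighbours in $B$.

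Substituting $\beta=m/(8|B|)$ and using $|B|\le m$, the common outneighbourhood bound becomes
\[
\beta^{4k}|B| \;=\; \frac{m^{4k}}{8^{4k}\,|B|^{4k-1}} \;\ge\; \frac{m}{8^{4k}} \;=\; \frac{m}{2^{12k}},
\]
which is the required inequality.

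The main obstacle I expect is the careful balancing of constants. A naive application of Lemma \ref{lem:mindegturan} with fixed $\beta=1/8$ loses a factor of $|B|/m\ge 3/4$ and falls short of $m/2^{12k}$. The key observation is that the true density $\beta=m/(8|B|)$ \emph{grows} as $|B|$ shrinks (i.e., as more of $F$ lands in $A_{i^*}$), and in the K\H{o}v\'ari-S\'os-Tur\'an output $\beta^{4k}|B|$ these two dependencies cancel exactly, recovering $m/2^{12k}$ regardless of how $F$ is distributed between $A_1$ and $A_2$.
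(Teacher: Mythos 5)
Your proof is correct and follows the paper's plan --- a median-ordering degree bound, then a K\H{o}v\'ari--S\'os--Tur\'an (KST) count, then a choice of $i\in[2]$ --- but your execution differs in two small but useful ways. First, you derive the degree bound $d^+(v)\ge(m+1)/2$ directly by moving $v$ past $A_2$; this is genuinely necessary, since \Cref{lem:mediandegrees} applied literally to the three subintervals $A_0,A_1,A_2$ (i.e.\ $t=2$) gives only $\tfrac{t-2}{2}m=0$ outneighbours in the middle interval $A_1$, so the paper's citation of that lemma is really an invocation of its \emph{proof technique} rather than its statement. Second, you pigeonhole onto $A_{i^*}\setminus F$ \emph{before} the KST step and choose the data-dependent $\beta=m/(8|B|)$, which makes $\beta^{4k}|B|\ge m/2^{12k}$ come out exactly; the paper instead applies \Cref{lem:mindegturan} to $(A_1\cup A_2)\setminus F$ with fixed $\beta=1/8$ and pigeonholes afterwards, and its stated intermediate bound ``$2m/2^{12k}$'' is only attained when $F=\emptyset$ --- the slack built into the exponent $\beta^{-4k}$ absorbs the discrepancy, but your version sidesteps it cleanly. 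One thing worth making explicit in your write-up: the KST count in the proof of \Cref{lem:mindegturan} requires $\binom{|A'_0|}{k}\le\beta^{-4k}$, which you get because $\beta\le 1/6<1/4$ gives $8k\le 2k/\beta$, hence $\binom{8k}{k}\le\binom{2k/\beta}{k}\le\beta^{-4k}$; this is exactly what lets you bypass the preliminary reduction to $|A|=\lceil k/\beta\rceil$ (a reduction that would not survive replacing the min-degree hypothesis by a density-only hypothesis, as you correctly note).
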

\begin{proof}
By \Cref{lem:mediandegrees}, every vertex in $A'_0$ has at least $m/2$ outneighbours in $A_1\cup A_2$, at least $m/4$ of which lie in $(A_1 \cup A_2)\sm F$. Applying \Cref{lem:mindegturan} with $\beta=1/8$, we get a set $X\subs A'_0$ of size $k$ with at least $2m/2^{12k}$ common outneighbours in $(A_1 \cup A_2)\sm F$. Of course, at least $m/2^{12k}$ of these common outneighbours must lie in the same set $A_i\sm F$ for some $i\in [2]$.
\end{proof}

We can now easily deduce an ordered variant of \Cref{lem:transsequence}. In this case there is no need for the assumption on backward edges: the median ordering provides all the structure we need.
\begin{lemma} \label{lem:mediansequence}
Let $k>0$, and suppose that in a median ordering of some tournament $T$, an interval is split into subintervals $A_1\prec \dots \prec A_t$ of size $m\ge 2^{20k}$ each.
Suppose we have a set $F$ of vertices $F$ such that $|F\cap A_i| \le m/8$ for every $i\in[t]$, and $X\subs A_1\sm F$ is a set of size $8k$ that induces a transitive tournament.

Then there are sets $X_1 \cplt \cdots \cplt X_s$ such that $X_1\subs X$, and each $X_i$ induces a transitive tournament of size $k$ in $A_{j_i}\sm F$, where the indices $1=j_1<\dots<j_s$ satisfy $j_{i+1}\in \{j_i+1, j_i+2\}$ for every $i$, and $j_s\in \{t-1,t\}$.
\end{lemma}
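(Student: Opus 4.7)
The plan is to construct the sequence $X_1, \ldots, X_s$ greedily from left to right, maintaining at each step a transitive ``reservoir'' $Z_i \subseteq A_{j_i}\sm F$ of size $8k$ that is dominated by the previously chosen set in the chain.

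Initialise $j_1:=1$ and $Z_1:=X$; by hypothesis $Z_1$ is a transitive $8k$-set in $A_1\sm F$. Given a transitive $Z_i\subseteq A_{j_i}\sm F$ of size $8k$, I would proceed as follows. If $j_i\in\{t-1,t\}$, terminate by choosing $X_i$ to be any $k$-subset of $Z_i$ and setting $s:=i$. Otherwise, apply \Cref{lem:medianturan} to the three consecutive subintervals $A_{j_i}\prec A_{j_i+1}\prec A_{j_i+2}$ with forbidden set $F\cap(A_{j_i+1}\cup A_{j_i+2})$ (of size at most $2\cdot m/8=m/4$) and with $A'_0:=Z_i$ (of size $8k$). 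This produces a $k$-subset $X_i\subseteq Z_i$ together with a common outneighbourhood of size at least $m/2^{12k}\ge 2^{8k}$ contained in $A_{j_{i+1}}\sm F$ for some $j_{i+1}\in\{j_i+1,j_i+2\}$. Finally, apply \Cref{lem:transitive} to this outneighbourhood to extract a transitive $Z_{i+1}$ of size $8k$ inside $A_{j_{i+1}}\sm F$, which serves as the reservoir for the next step.

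The invariant $X_i\subseteq Z_i$ ensures that each $X_i$ inherits transitivity from $Z_i$; and the inclusion chain $X_i\cplt Z_{i+1}\supseteq X_{i+1}$ gives the required domination $X_i\cplt X_{i+1}$. Since the indices $j_i$ strictly increase by one or two at each step and are bounded by $t$, the process terminates after finitely many steps with $j_s\in\{t-1,t\}$.

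The main technical point I expect to verify is that the reservoir can indeed be regenerated after each application of \Cref{lem:medianturan}: the common outneighbourhood has size only $m/2^{12k}$, and this must still be large enough ($\ge 2^{8k}$) for \Cref{lem:transitive} to produce a fresh transitive $8k$-set for the next iteration. This is precisely what the hypothesis $m\ge 2^{20k}$ guarantees, leaving essentially no slack. The only other parameter check is that the forbidden set restricted to the two ``target'' subintervals has size at most $m/4$, which follows immediately from the per-subinterval bound $|F\cap A_i|\le m/8$.
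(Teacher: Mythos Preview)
Your proposal is correct and essentially identical to the paper's own proof: the paper initialises $X'_1=X$, iterates \Cref{lem:medianturan} on $A_{j_i}\cup A_{j_i+1}\cup A_{j_i+2}$ with forbidden set $(F\cap A_{j_i+1})\cup(F\cap A_{j_i+2})$ to extract $X_i\subseteq X'_i$ and a common outneighbourhood of size at least $m/2^{12k}\ge 2^{8k}$, then applies \Cref{lem:transitive} to refresh the $8k$-reservoir $X'_{i+1}$, stopping once $j_i\ge t-1$. Your $Z_i$ plays exactly the role of the paper's $X'_i$, and the parameter checks you highlight are the same ones used there.
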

\begin{proof}
Set $j_1=1$ and $X'_1=X$. We repeat the following step for every $i=1,2,\dots$ as long as $j_i<t-1$.

Applying \Cref{lem:medianturan} to the interval $A_{j_i}\cup A_{j_i+1}\cup A_{j_i+2}$ with $A'_0=X'_i$ and forbidden vertices $(F\cap A_{j_i+1})\cup (F\cap A_{j_i+2})$ gives a $k$-set $X_i\subs X'_i$ with at least $m/2^{12k} \ge 2^{8k}$ common outneighbours in $A_{j_{i+1}}\sm F$ for some $j_{i+1}\in \{j_i+1,j_i+2\}$. By \Cref{lem:transitive}, we can find a subset $X'_{i+1}$ of $8k$ common outneighbours that induce a transitive subtournament.

This process stops with some $j_s\in \{t-1,t\}$, and we can then choose any $k$-subset $X_s\subs X'_s$ so that $X_1,\dots,X_s$ satisfy the statement.
\end{proof}

The next lemma is a key component of our arguments, and perhaps the most technical result in the paper. It says that if two sets are far enough apart in a median ordering, then we can connect them with a blowup of a path.

\begin{lemma} \label{lem:medtranssequence}
Suppose that in a median ordering of some tournament $T$, an interval is split into subintervals $A_0\prec \dots \prec A_t$ of size $m\ge 100\cdot 2^{40400k}$ each, where $t\ge 50$. Let $A'_0 \subs A_0$ and $A'_t \subs A_t$ be subsets of size at least $2^{4001k}$, and let $F$ be a set of at most $m/2$ vertices in $A=A_1\cup\dots \cup A_{t-1}$. Then there is $s\le 3$ and disjoint sets $X_0\subs A'_0$ and $X_1,\dots, X_{s-1}\subs A\sm F$ and $X_s\subs A'_t$, such that $X_0 \cplt \cdots \cplt X_s$, and each $X_i$ induces a transitive tournament of size $k$.
\end{lemma}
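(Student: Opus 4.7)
The plan is to build the chain $X_0 \cplt X_1 \cplt \cdots \cplt X_s$ (with $s \le 3$) by constructing two short half-chains---one growing from $A'_0$ into the early sub-intervals of $A \sm F$ and one symmetrically from $A'_t$---and then bridging them across a carefully chosen middle sub-interval.

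For the first half, \Cref{lem:mediandegrees} applied to the prefix $A_0 \prec A_1 \prec \cdots \prec A_c$ for a small constant $c$ (say $c=4$) shows that every vertex of $A'_0$ has $\Omega(m)$ outneighbours in $A_1 \cup \cdots \cup A_{c-1}$; since $|F| \le m/2$, this gives a lower bound of order $1$ on the density $d(A'_0, (A_1 \cup \cdots \cup A_{c-1}) \sm F)$. \Cref{lem:transturan} (with a transitive-set parameter of order $k$) then produces a transitive set $Y^+ \subs (A_1 \cup \cdots \cup A_{c-1}) \sm F$ of size, say, $8k$ together with a large subset $X \subs A'_0$ satisfying $X \cplt Y^+$; a transitive $k$-subset $X_0 \subs X$ is then extracted via \Cref{lem:transitive}. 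Pigeonholing over the sub-intervals and using that at most one $A_i$ can contain more than $m/4$ vertices of $F$, I arrange for $Y^+ \cap A_{i^*}$ to have size at least $2k$ for some ``good'' sub-interval $A_{i^*}$ with $|F \cap A_{i^*}| \le m/4$. The symmetric construction from $A'_t$ yields a transitive $k$-set $X_s \subs A'_t$ and a transitive set $Y^- \subs A_{j^*} \sm F$ of size at least $2k$ in a good sub-interval $A_{j^*}$, with $Y^- \cplt X_s$.

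To complete the chain I need two intermediates $X_1, X_2 \subs A \sm F$. I would place both inside a single middle sub-interval $A_\ell$ (with $i^* < \ell < j^*$ and $|F \cap A_\ell| \le m/4$), chosen so that both $d(Y^+ \cap A_{i^*}, A_\ell \sm F)$ and $d(A_\ell \sm F, Y^- \cap A_{j^*})$ are bounded below by a positive constant; such an $\ell$ exists by averaging, since the median-degree bounds force the averaged densities from $Y^+$ to middle sub-intervals (and from middle sub-intervals to $Y^-$) to be close to $1/2$. The key structural fact used inside $A_\ell \sm F$ is that in any transitive $2k$-tournament $T$, the set of the first $k$ vertices and the set of the last $k$ vertices (in transitive order) automatically satisfy the dominating relation $\cplt$. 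I therefore look for a transitive $T \subs A_\ell \sm F$ of size $2k$ whose first $k$ vertices lie in the common outneighbourhood of a transitive $k$-subset of $Y^+ \cap A_{i^*}$ and whose last $k$ vertices lie in the common inneighbourhood of a transitive $k$-subset of $Y^- \cap A_{j^*}$; iterated applications of \Cref{lem:mindegturan} (or \Cref{lem:medianturan}) on both sides, together with the density lower bounds, produce such a $T$. Taking $X_1, X_2$ to be the two halves of $T$ yields the chain $X_0 \cplt X_1 \cplt X_2 \cplt X_s$ with $s = 3$.

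The main obstacle is the bridging step. Median ordering does not directly yield a lower bound on $d(Y^+, Y^-)$ for arbitrary subsets in widely separated sub-intervals (simple tournament examples show this density can be close to $0$), so $Y^+$ and $Y^-$ cannot be connected by a single application of \Cref{lem:transturan}. Routing through a middle sub-interval $A_\ell$, and exploiting that two consecutive halves of a transitive tournament automatically satisfy $\cplt$, is what keeps the chain short. The quantitative losses from each application of a density lemma are $2^{O(k)}$, and the hypotheses $|A'_0|, |A'_t| \ge 2^{4001k}$ and $m \ge 100 \cdot 2^{40400k}$ provide ample slack to absorb them.
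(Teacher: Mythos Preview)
Your plan has two gaps, and the second one is structural.

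First, a bookkeeping error: the chain you actually build is $X_0 \cplt Y^+_1 \cplt X_1 \cplt X_2 \cplt Y^-_1 \cplt X_s$, not $X_0 \cplt X_1 \cplt X_2 \cplt X_s$. You established $X_0 \cplt Y^+$ (via \Cref{lem:transturan}) and then separately that some $k$-subset $Y^+_1\subs Y^+$ satisfies $Y^+_1 \cplt X_1$; but nothing forces the edges from $X_0 \subs A'_0$ to go directly to $X_1 \subs A_\ell$. The sets $Y^+_1$ and $Y^-_1$ are genuine intermediates, so your construction yields $s=5$, not $s\le 3$.

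Second, and more seriously, the bridging step inside $A_\ell$ is unjustified. Let $N^+ \subs A_\ell$ be the common outneighbourhood of your $k$-subset of $Y^+$ and $N^- \subs A_\ell$ the common inneighbourhood of your $k$-subset of $Y^-$. To obtain a transitive $2k$-set whose first half sits in $N^+$ and whose second half sits in $N^-$ you need, at minimum, a constant lower bound on $d(N^+,N^-)$. Nothing in your argument supplies one: the median ordering controls degrees between whole intervals, not between two arbitrary (exponentially small) subsets of the \emph{same} interval, and $d(N^+,N^-)$ can be arbitrarily close to $0$. Iterating \Cref{lem:mindegturan} does not help, since each application only refines one side and says nothing about how that side interacts with the other. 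The averaging argument for selecting $\ell$ has a related weakness: an average density near $1/2$ only guarantees that roughly a third of the $\ell$'s are good for each side, which is not enough to force an overlap.

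The paper resolves exactly this obstacle with a global argument rather than a local one. It defines $A^I\subs A\sm F$ as the set of vertices with at least $\eps|A'_0|$ inneighbours in $A'_0$, and $A^O$ analogously for $A'_t$; both have size close to $|A|/2$. If $A^I\cap A^O$ is large, one intermediate suffices ($s=2$). Otherwise $A^I$ and $A^O$ nearly partition $A$, and the decisive dichotomy is on $d(A^I,A^O)$: if it exceeds a small constant, \Cref{cor:transturan} produces two intermediates ($s=3$). If even this fails, the paper shows that moving all of $A^I$ past $A^O$ strictly increases the number of forward edges, contradicting the median-ordering hypothesis. This reordering contradiction is the key idea your outline is missing; it is what forces the forward density you need, and it cannot be replaced by averaging over a single block $A_\ell$.
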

\begin{proof}
Let $\eps = 1/100$, and define $A^I\subs A\sm F$ as the set of vertices with at least $\eps |A'_0|$ inneighbours in $A'_0$, and $A^O\subs A\sm F$ as the set of vertices with at least $\eps |A'_t|$ outneighbours in $A'_t$. We claim that $|A^I|, |A^O| \ge (\frac{t-3}{2}-(t-1)\eps)m$. Indeed, there are at least $\frac{t-2}{2}m|A'_0| =  \frac{t-2}{2t-2}|A||A'_0|$ edges from $A'_0$ to $A$ by \Cref{lem:mediandegrees}, so \Cref{lem:mindegrees} applied to the bipartite graph induced by these edges with $\beta = \frac{t-2}{2t-2}$ gives $(\frac{t-2}{2}-(t-1)\eps)m$ vertices in $A$ with at least $\eps|A'_0|$ inneighbours in $A'_0$. Excluding the vertices of $F$ yields the lower bound on $|A^I|$. The bound on $|A^O|$ is analogous.

If $A^I$ and $A^O$ share at least $2^{k/\eps^2}$ vertices, then we are done with $s=2$ as follows. By \Cref{lem:transitive}, there is a set $Y\subs A^I\cap A^O$ of size $k/\eps^2$ that induces a transitive tournament. Then every vertex of $Y$ has at least $\eps |A'_t|$ outneighbours in $A'_t$, so we can apply \Cref{lem:mindegturan} to the bipartite graph of the edges from $Y$ to $A'_t$ to get sets $Y'\subs Y$ and $\tA_t \subs A'_t$ such that $|Y'|\ge k/\eps$ and $|\tA_t| \ge \eps^{4k/\eps}|A'_t|\ge |A'_t|/2^{4000k} \ge 2^{k}$ with $Y'\cplt \tA$. Once again, every vertex of $Y'$ has at least $\eps |A'_0|$ inneighbours in $A'_0$, so \Cref{lem:mindegturan} gives $Y''\subs Y'$ and $\tA_0\subs A'_0$ such that $|Y''|\ge k$ and $|\tA_0|\ge \eps^{4k}|A'_0| \ge 2^{k}$ with $\tA_0 \cplt Y''$. We can then choose $X_0$ and $X_2$ to be $k$-subsets of $\tA_0$ and $\tA_t$ that induce transitive tournaments, and $X_1$ to be $Y''$.

So we may assume that $|A^I\cap A^O| < 2^{k/\eps^2} < \eps m$. Let $\tA^I = A^I \sm A^O$. Then $\tA^I$ is disjoint from $A^O$ and has size at least $(\frac{t-3}{2}-t\eps)m$. Also, $|\tA^I \cup A^O| > (t-3)m - (2t-1)\eps m$, so the set $A^X = A \sm (A^I \cup A^O)$ of leftover vertices (including $F$) has size at most $2m + (2t-1)\eps m$.

If $d(\tA^I,A^O) > \eps$, then we can conclude the argument with $s=3$ similarly to the previous case: As $|\tA^I|, |A^O|\ge m\ge \eps^{-5k/\eps}$, we can apply \Cref{cor:transturan} to find sets $Y\subs \tA^I$ and $Z\subs A^O$ that induce transitive tournaments of size $k/\eps$ and satisfy $Y\cplt Z$. Now every vertex of $Y$ has at least $\eps |A'_0|$ inneighbours in $A'_0$ and every vertex of $Z$ has at least $\eps |A'_t|$ outneighbours in $A'_t$, so two independent applications of \Cref{lem:mindegturan} gives $k$-sets $Y'\subs Y$ and $Z'\subs Z$ as well as sets $\tA_0\subs A'_0$ and $\tA_t\subs A'_t$ of size $|\tA_0| \ge \eps^{4k}|A'_0| \ge 2^k$ and $|\tA_t| \ge \eps^{4k}|A'_t| \ge 2^k$ such that $\tA_0\cplt Y'\cplt Z'\cplt \tA_t$. We then choose $X_0$ and $X_3$ to be transitive $k$-subsets of $\tA_0$ and $\tA_t$, respectively, and set $X_1=Y'$ and $X_2=Z'$.

So let us also assume that $d(\tA^I,A^O) \le \eps$. We will show that we could not have started with a median ordering in this case. Let $B= A_1\cup \dots \cup A_{\floor{(t-1)/2}}$ and $C=A_{t-\floor{(t-1)/2}}\cup \dots \cup A_{t-1}$, and let us first bound the size of $B^I = B\cap A^I$ and $C^O = C\cap A^O$. By \Cref{lem:mediandegrees}, there are at least $\frac{t-6}{4}|A'_0|m$ edges from $A'_0$ to $B$ and $\frac{t-6}{4}|A'_t|m$ edges from $C$ to $A'_t$, so \Cref{lem:mindegrees} implies $|B^I|,|C^I| \ge (\frac{t-6}{4}-(t-1)\eps)m$. We then also see that $\tB^I = B\cap \tA^I$ has size at least $(\frac{t-6}{4}-t\eps)m$.

Let us now consider the ordering on the vertices of the tournament that moves all vertices in $\tA^I$ to the end of the interval $A$, without affecting the relative order of vertices in any other way. So $A'_0 \prec (A^O\cup A^X) \prec \tA^I \prec A'_t$ in this new ordering. With this reordering, we may lose up to $|\tA^I||A^X|$ forward edges between $\tA^I$ and $A^X$, and up to $\eps |\tA^I||A^O|$ between $\tA^I$ and $A^O$, but we will surely gain at least $|\tB^I||C^O|-\eps |\tA^I||A^O|$ forward edges between $\tB^I$ and $C^O$.
With $t\ge 50$ and $\eps=1/100$, we can bound the terms as follows:
\begin{align*}
 |\tB^I||C^O| &\ge \left(\frac{t-6}{4}-t\eps\right)\left(\frac{t-6}{4}-(t-1)\eps\right)m^2 \ge \frac{(t-2)^2}{25} \cdot m^2 \\
 |\tA^I||A^O| &\le \left(\frac{t-2}{2} - t\eps\right) \left(\frac{t-2}{2} - (t-1)\eps\right)m^2 \le \frac{(t-2)^2}{4} \cdot m^2 \\
 |\tA^I||A^X| &\le \left(\frac{t-2}{2} - t\eps\right)\left(2 + (2t-1)\eps\right)m^2 \le \frac{t-2}{2}\cdot \frac{t-2}{15} \cdot m^2.
\end{align*}
This means that the new ordering has at least
\[
 |\tB^I||C^O|- 2\eps |\tA^I||A^O| - |\tA^I||A^X| \ge \left( \frac{1}{25} - \frac{1}{200} - \frac{1}{30} \right) (t-2)^2m^2 \ge \frac{(t-2)^2m^2}{600}
\]
more forward edges than the median ordering we started with, which is a contradiction.
\end{proof}

It will be more convenient for us to apply the previous lemma via the following statement.

\begin{corollary} \label{cor:medtranssequence}
Suppose that in a median ordering of some tournament $T$, an interval is split into subintervals $A_0\prec \dots \prec A_t$ of size $m\ge 100\cdot 2^{40400k}$ each, where $t\ge 60$. Let $X \subs A_0$ and $X' \subs A_t$ be $4k$-subsets that induce transitive tournaments, and let $F$ be a set of at most $m/2$ forbidden vertices in $A=A_1\cup\dots \cup A_{t-1}$. Then there is $s\le 5$ and disjoint sets $X_0\subs X$ and $X_1,\dots, X_{s-1}\subs A\sm F$ and $X_s\subs X'$, such that $X_0 \cplt \cdots \cplt X_s$, and each $X_i$ induces a transitive tournament of size $k$.
\end{corollary}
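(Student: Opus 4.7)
The plan is to use $X$ and $X'$ as ``seeds'' and grow each a few subintervals into the interior, so as to set up an application of \Cref{lem:medtranssequence}. More precisely, I would produce transitive $k$-sets $X_0 \subs X$ and $X_s \subs X'$, together with large sets $Y \subs A_{i_0} \sm F$ and $Z \subs A_{j_0} \sm F$ (for some small $i_0$ and some $j_0$ close to $t$) such that $X_0 \cplt Y$ and $Z \cplt X_s$; then \Cref{lem:medtranssequence}, applied to the sub-range between $A_{i_0}$ and $A_{j_0}$ with $A'_0 := Y$ and $A'_t := Z$, supplies the interior chain.

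To construct $X_0$ and $Y$, I would work with the six consecutive subintervals $A_0 \prec A_1 \prec \dots \prec A_5$. By \Cref{lem:mediandegrees} every vertex in $A_0$ has at least $3m/2$ outneighbours in $A_1 \cup \dots \cup A_4$, and hence at least $m$ of them lie in $B := (A_1 \cup \dots \cup A_4) \sm F$ (using $|F| \le m/2$). Applying \Cref{lem:mindegturan} with $\beta = 1/4$ to the bipartite graph between $X$ (which has exactly $k/\beta = 4k$ vertices) and $B$ produces a $k$-subset $X_0 \subs X$ with at least $\beta^{4k}|B| \ge 2^{-8k-1}m$ common outneighbours in $B$. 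Pigeonholing over the four subintervals gives an index $i_0 \in \{1,\dots,4\}$ for which at least $2^{-8k-3}m \ge 2^{4001k}$ of these outneighbours lie in $A_{i_0} \sm F$; call this set $Y$. Then $X_0$ is transitive (as a subset of $X$), $X_0 \cplt Y$, and $|Y| \ge 2^{4001k}$. A completely symmetric argument on $A_{t-5} \prec \dots \prec A_t$ produces an analogous transitive $k$-subset $X_s \subs X'$ and a set $Z \subs A_{j_0} \sm F$ with $j_0 \in \{t-4, \dots, t-1\}$, $Z \cplt X_s$, and $|Z| \ge 2^{4001k}$.

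Finally I would apply \Cref{lem:medtranssequence} to the sub-range $A_{i_0} \prec A_{i_0+1} \prec \dots \prec A_{j_0}$, which comprises $j_0 - i_0 + 1 \ge t - 7 \ge 53$ pieces, taking $A'_0 := Y$, $A'_t := Z$, and forbidden set $F \cap (A_{i_0+1} \cup \dots \cup A_{j_0-1})$ (still of size at most $m/2$). This yields $s' \le 3$ and disjoint transitive $k$-sets $X'_0 \subs Y$, $X'_1, \dots, X'_{s'-1} \subs A \sm F$, and $X'_{s'} \subs Z$ with $X'_0 \cplt \dots \cplt X'_{s'}$. Concatenating with the two seed steps produces $X_0 \cplt X'_0 \cplt \dots \cplt X'_{s'} \cplt X_s$, a chain of $s' + 3 \le 6$ transitive $k$-sets; relabelling them as $X_0, X_1, \dots, X_s$ yields the required output with $s \le 5$. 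Disjointness is immediate, since all of these sets live in pairwise distinct subintervals of the partition $A_0, \dots, A_t$. The only delicate point is that $X$ has only $4k$ vertices (rather than the $8k$ used in \Cref{lem:mediansequence}), which forces the parameter $\beta = 1/4$ in \Cref{lem:mindegturan} and hence the five-step look-ahead: this is exactly what is needed so that \Cref{lem:mediandegrees} supplies enough outneighbours to absorb the up to $m/2$ forbidden vertices and still leave a $(1/4)$-fraction.
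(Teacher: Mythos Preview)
Your proof is correct and essentially identical to the paper's: both use \Cref{lem:mediandegrees} together with \Cref{lem:mindegturan} (with $\beta=1/4$) to push $X$ and $X'$ a few subintervals inward to large sets in some $A_{i_0}$ ($i_0\le 4$) and $A_{j_0}$ ($j_0\ge t-4$), then apply \Cref{lem:medtranssequence} on the remaining range and concatenate. One small quibble: your disjointness justification (``pairwise distinct subintervals'') is not literally true for the interior sets $X'_1,\dots,X'_{s'-1}$, but their disjointness is already guaranteed by the statement of \Cref{lem:medtranssequence}, so the conclusion stands.
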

\begin{proof}
By \Cref{lem:mediandegrees}, every vertex in $X$ has at least $3m/2$ outneighbours in $A_1\cup A_2\cup A_3\cup A_4$, at least $m$ of which are not in $F$. So we can apply \Cref{lem:mindegturan} with $A=X$, $B= (A_1\cup A_2\cup A_3\cup A_4)\sm F$ and $\beta=1/4$ to find a $k$-subset $X_0\subs X$ with at least $3m/2^{8k}$ common outneighbours in $B$. At least $m/2^{10k}\ge 2^{20001k}$ of these are in the same $A_i$ with $i\le 4$, let us denote them by $A'_i$.

The same argument can be applied to $X'$ from the other direction, so we similarly get a $k$-set $X_t\subs X'$ with and another set $A'_{i'}\subs A_{i'}\sm F$ for $i'\ge t-4$ such that $|A'_{i'}|\ge 2^{2000k}$ and $A'_{i'}\cplt X_t$.

But then we can apply \Cref{lem:medtranssequence} to the interval $A_i\cup\dots\cup A_{i'}$ to find transitive $k$-sets $Y_0\cplt\cdots\cplt Y_{s'}$ with $s'\le 3$ such that $Y_0\subs A'_i$, $Y_1,\dots, Y_{s-1}\subs (A_{i+1}\cup\dots\cup A_{i'-1})\sm F$, and $Y_{s'}\subs A'_{i'}$. Relabeling $X_0,Y_0,\dots,Y_{s'},X_t$ as $X_0,\dots, X_s$ with $s=s'+2$ yields the sets we were looking for.
\end{proof}

\subsection{Absorbers}

\begin{definition}
We say that a tournament $H$ is a \emph{$k$-absorber} if there is an $r'$-set $Q=\{q_1,\dots,q_{r'}\}$ and a partition $V(H)=S_0\cup \dots \cup S_r\cup Q$ such that
\begin{enumerate}[(i)]
    \item $|Q|=r'=2^{10k}$ and $r>r'$.
    \item $S_0\cplt S_1\cplt \cdots \cplt S_r \cplt S_0$, and each $S_i$ induces a transitive tournament of size $2k$.
    \item $S_0 \cplt Q\cplt S_{r'+1}$, and $S_i\cplt q_i \cplt S_{i+1}$ for $i\in [r']$. 
\end{enumerate}
We will refer to the set $Q=Q(H)$ as the \emph{absorbing part} of $H$.
\end{definition}

\begin{figure}[h]
    \centering
    \scalebox{0.4}{
        \begin{tikzpicture}
    \newcommand{\cVertexSize}{0.5mm}
    \newcommand{\cFatArrowColor}{gray}              
    \newcommand{\cThinArrowColor}{black}              
    \newcommand{\cMiddleBlobsVerticalOffset}{6}     
    \newcommand{\drawVertex}[4]{
        \node[circle, draw=black, fill=black, inner sep=\cVertexSize, label=\Huge{#1}](#2) at (#3, #4) {};
    }
    \newcommand{\transitiveBlob}[4]{
    \node[ellipse, minimum height=1.5cm,minimum width=3cm, draw=black, label=below:\Huge{#1}](#2) at (#3, #4) {\Huge{$\rightarrow$}};
    }
    \node[ellipse, minimum height=1.5cm,minimum width=3cm, draw=black, label=below:\Huge{$S_0$}](blobS) at (-12, 0) {\Huge{$\rightarrow$}};
    
    \node[ellipse, minimum height=1.5cm,minimum width=3cm, draw=black, label=below:\Huge{$S_1$}](blobT) at (-6,0) {\Huge{$\rightarrow$}};

 \node[ellipse, minimum height=1.5cm,minimum width=3cm, draw=black, label=below:\Huge{$S_{r'}$}](blobY) at (4,0) {\Huge{$\rightarrow$}};

 \node[ellipse, minimum height=1.5cm,minimum width=3cm, draw=black, label=below:\Huge{$S_{r'+1}$}](blobL) at (10,0) {\Huge{$\rightarrow$}};

 \node(dotsFirst) at ($(blobT)!.5!(blobY)$) {\Huge{\ldots}};
 
     \node[ellipse, minimum height=1.5cm,minimum width=3cm, draw=black, label=below:\Huge{$S_{r}$}](blobM) at (20,0) {\Huge{$\rightarrow$}};

      \node[ellipse, draw=black, minimum height=2.5cm,minimum width=15cm, label=\Huge{$Q$}](blobQ) at (0.5,5) {};
    
  \node(dotsSecond) at ($(blobL)!.5!(blobM)$) {\Huge{\ldots}};

    \begin{scope}[->, >=latex, color=\cFatArrowColor, line width=2mm, shorten >=15pt,shorten <=15pt]
        \draw (blobS) edge (blobT);
        \draw (blobT) edge (dotsFirst);
        \draw (dotsFirst) edge (blobY);
        \draw (dotsSecond) edge (blobM);
        \draw (blobL) edge (dotsSecond);
        \draw (blobS) edge (blobQ);
        \draw (blobQ) edge (blobM);
        \draw (blobY) edge (blobL);
        \draw (blobM) edge[bend left] (blobS);
   
    \end{scope}

    
    \drawVertex{$q_1$}{q1}{ - 5}{4.5}
    \drawVertex{$q_2$}{q2}{ -2}{4.5}
    \drawVertex{$q_{r'-1}$}{q_{r'-1}}{ 2}{4.5}
    \drawVertex{$q_{r'}$}{q_{r'}}{5}{4.5}
    \node at (0, 4.5) {\Huge{...}};
    
    \begin{scope}[->, >=latex, line width=1mm, shorten >=15pt,shorten <=15pt]
        \draw (blobT) edge (q1);
        \draw (q1) edge (dotsFirst);
        \draw (blobY) edge (q_{r'});
        \draw (q_{r'}) edge (blobL);
    \end{scope}

\end{tikzpicture}
    }
    \label{fig:construction_2_5_k}
\end{figure}

It is easy to see that every $k$-absorber $H$ contains the $k$-th power of a Hamilton cycle. In fact, its structure is more robust.
The key property of these absorbers is captured by the following statement.

\begin{proposition} \label{lem:absorber1cover}
Let $H$ be a $k$-absorber, and $X,Y\subs Q(H)$ be two vertex sets of size $2k$ in the absorbing part of $H$ that induce transitive subtournaments. Then $H$ contains the $k$-th power of a Hamilton path whose first $k$ vertices are in $Y$ and last $k$ vertices are in $X$.
\end{proposition}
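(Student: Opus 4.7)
The plan is to construct the desired Hamilton path explicitly using a ``two-pass'' traversal of the cyclic part of the absorber. The need for two passes is forced by the following observation: a single copy of $S_0$ cannot serve both of its necessary roles, namely entering $S_1$ (the only way into the cycle's interior $S_1,\dots,S_{r'}$, since among the declared $\cplt$ relations $S_1$ is dominated only by $S_0$) and immediately preceding $X$ at the end (since $S_0$ is the only block guaranteed to dominate every vertex of $Q$). So $S_0$ has to be split into two halves; since each of $S_{r'+1},\dots,S_r$ is entered only from its predecessor in the cycle, the split then cascades to these blocks as well.

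Concretely, I would first pick $k$-subsets $Y_1\subseteq Y$ and $X_1\subseteq X$ with $Y_1\cap X_1=\emptyset$, which is possible since $|Y|=|X|=2k$: any $k$-subset $Y_1$ works, and then $|X\setminus Y_1|\ge k$, so pick $X_1\subseteq X\setminus Y_1$. For each $i\in\{0,r'+1,\dots,r\}$ partition $S_i$ into two $k$-subsets $S_i^a$ and $S_i^b$ (each of which is automatically transitive). The Hamilton path is then the concatenation
\[
Y_1,\ S_{r'+1}^a,\dots,S_r^a,S_0^a,\ S_1,q_1^*,S_2,q_2^*,\dots,S_{r'},q_{r'}^*,\ S_{r'+1}^b,\dots,S_r^b,S_0^b,\ X_1,
\]
where each block is written in its own transitive order, and $q_j^*=q_j$ if $q_j\notin Y_1\cup X_1$, otherwise $q_j^*$ is omitted.

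The verification is then routine. A counting check shows every vertex of $V(H)$ is covered exactly once. Every consecutive block transition is directly supported by a $\cplt$ relation from the absorber's definition: $Q\cplt S_{r'+1}$ gives $Y_1\to S_{r'+1}^a$ and $q_{r'}^*\to S_{r'+1}^b$; the cycle relations $S_i\cplt S_{i+1}$ handle all transitions within both passes, including $S_r^a\to S_0^a$, $S_r^b\to S_0^b$, and $S_0^a\to S_1$; $S_j\cplt q_j\cplt S_{j+1}$ handles the inserted $q$-vertices; and $S_0\cplt Q$ gives $S_0^b\to X_1$. Because each block has size $k$, $2k$, or $1$, any $(k+1)$-window in the path meets at most three consecutive blocks, and the required vertex-to-vertex dominations in such a window follow from the block-transition relations together with the transitivity inside each individual block.

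The main obstacle is recognising why the splits are needed and setting them up consistently; once one commits to the two-pass structure that re-enters the ``upper'' half of the cycle at its beginning via $Q\cplt S_{r'+1}$, the rest of the construction and its verification are essentially mechanical.
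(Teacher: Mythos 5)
Your proof is correct and follows essentially the same two-pass construction as the paper: split $S_0$ and $S_{r'+1},\dots,S_r$ into $k$-halves, traverse the upper arc once, then the full lower arc $S_1,\dots,S_{r'}$ inserting the $q_j$'s, then the upper arc again, with the endpoints $Y_1\subseteq Y$ and $X_1\subseteq X$ attached via $Q\cplt S_{r'+1}$ and $S_0\cplt Q$. In fact your indexing is the cleaner one — the paper's displayed backbone enters at $S^1_{r'}$ and inserts $q_i$ between $S_{i-1}$ and $S_i$, which is off by one relative to the absorber definition ($Q\cplt S_{r'+1}$ and $S_i\cplt q_i\cplt S_{i+1}$); your version, entering at $S^a_{r'+1}$ and placing $q_j$ between $S_j$ and $S_{j+1}$, matches the definition as stated.
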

\begin{proof}
Let $Y_0 \subs Y$ and $X_0 \subs X\sm Y_0$ be arbitrary disjoint subsets of size $k$. We will cover the vertices of $H$ by the $k$-th power of a path whose first $k$ vertices are the ones in $Y_0$ and last $k$ vertices are the ones in $X_0$.

Let us split each vertex set $S_i$ of the $k$-absorber $H$ arbitrarily into two $k$-sets $S^1_i$ and $S^2_i$. The backbone of the path power is given by the relations
\[ Y_0 \cplt S^1_{r'} \cplt \cdots \cplt S^1_r \cplt S^1_0 \cplt S_1 \cplt \cdots \cplt S_{r'-1} \cplt S^2_{r'} \cplt \cdots \cplt S^2_r \cplt S^2_0 \cplt X_0.  \]
As each of these sets induces a transitive subtournament of size at least $k$, we can combine them into the $k$-th power of a path touching all vertices in the above order. Finally, we can insert any leftover vertex $q_i\in Q \sm (X_0\cup Y_0)$ between $S_{i-1}$ and $S_i$ (or between $S^1_0$ and $S_1$ when $i=1$, and between $S_{r'-1}$ and $S^2_{r'}$ when $i=r'$) by the assumptions on $k$-absorbers.
\end{proof}

\begin{lemma} \label{lem:absorbers}
Let $H_1,\dots, H_s$ be vertex-disjoint $k$-absorbers in a tournament $T$. Then $T$ contains the $k$-th power of a directed path with vertex set $V(H_1)\cup\dots\cup V(H_s)$.
\end{lemma}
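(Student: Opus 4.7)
The plan is to concatenate the Hamilton path powers produced by \Cref{lem:absorber1cover} inside each absorber. Given transitive $2k$-sets $X,Y\subs Q(H_i)$, that proposition yields a $k$-th power of a Hamilton path $P_i$ in $H_i$ whose first $k$ vertices form any prescribed $k$-subset of $Y$ and whose last $k$ vertices form any prescribed disjoint $k$-subset of $X$. So if I can order the absorbers as $H_1,\dots,H_s$ and choose, in each $Q(H_i)$, transitive $2k$-sets $Y^i, X^i$ with $X^i\cplt Y^{i+1}$ for every $i\in[s-1]$, then picking disjoint $k$-subsets $Y^{i,0}\subs Y^i$ and $X^{i,0}\subs X^i\sm Y^{i,0}$ and concatenating $P_1P_2\cdots P_s$ will produce a $k$-th power of a path on $V(H_1)\cup\cdots\cup V(H_s)$: the only edges required across the join between $P_i$ and $P_{i+1}$ go from the last $k$ vertices of $P_i$ (which lie in $X^i$) to the first $k$ vertices of $P_{i+1}$ (which lie in $Y^{i+1}$), and these are all supplied by $X^i\cplt Y^{i+1}$.

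To produce the ordering, I form an auxiliary tournament $\mathcal{A}$ on the absorbers by orienting $H_i\to H_j$ whenever $d(Q(H_i),Q(H_j))\ge 1/2$, breaking ties arbitrarily; this is a tournament since $d(Q(H_i),Q(H_j))+d(Q(H_j),Q(H_i))=1$. Any tournament contains a Hamilton path, so after relabelling I may assume $d(Q(H_i),Q(H_{i+1}))\ge 1/2$ for every $i\in[s-1]$. For each such pair I apply \Cref{cor:transturan} with $\beta=1/2$ and with $k$ replaced by $2k$; the size condition $|Q(H_i)|,|Q(H_{i+1})|\ge (1/2)^{-5\cdot 2k}=2^{10k}$ is met with equality since $|Q(H_i)|=2^{10k}$ by the definition of a $k$-absorber. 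The corollary returns transitive $2k$-sets $X^i\subs Q(H_i)$ and $Y^{i+1}\subs Q(H_{i+1})$ with $X^i\cplt Y^{i+1}$. The leftover boundary sets $Y^1$ and $X^s$ are not constrained by chaining; I pick them as arbitrary transitive $2k$-subsets of $Q(H_1)$ and $Q(H_s)$, which exist by \Cref{lem:transitive}.

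Finally, inside each $H_i$ I have two transitive $2k$-sets $Y^i,X^i \subs Q(H_i)$. I first select any $k$-subset $Y^{i,0}\subs Y^i$; then $|X^i\sm Y^{i,0}|\ge 2k-k=k$, so I can pick a disjoint $k$-subset $X^{i,0}\subs X^i\sm Y^{i,0}$. Invoking \Cref{lem:absorber1cover} in each $H_i$ produces the promised Hamilton path powers $P_i$, whose concatenation is the required $k$-th power of a path. There is no real obstacle beyond matching the threshold $2^{10k}=(1/2)^{-10k}$ in \Cref{cor:transturan} with the exact size of $Q(H_i)$; this is precisely why the absorber was defined with $|Q|=2^{10k}$, and the matching serves as a sanity check that the definition is properly calibrated for this gluing.
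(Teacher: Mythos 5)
Your proof is correct and follows essentially the same route as the paper's: build the auxiliary tournament on the absorbing sets $Q(H_i)$ using density $\ge 1/2$, take a Hamilton path, connect consecutive $Q(H_i)$'s via \Cref{cor:transturan} with $\beta=1/2$ (applied for size $2k$, which matches $|Q(H_i)|=2^{10k}$ exactly), and concatenate the path powers from \Cref{lem:absorber1cover}. The only cosmetic difference is that for the endpoint sets $Y^1$ and $X^s$ the paper simply reuses $Y_1:=X_1$ and $X_s:=Y_s$, whereas you invoke \Cref{lem:transitive} to pick fresh transitive $2k$-sets; both work.
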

\begin{proof}
Consider the auxiliary tournament on vertex set $[s]$, where $ij$ is an edge if $d(Q(H_i),Q(H_j))\ge 1/2$ (keeping only one of $ij$ and $ji$ if $d(Q(H_i),Q(H_j))= 1/2$). Like every tournament, this must contain a Hamilton path, so we may assume that $1,\dots,s$ is a directed path.

Now for every $i\in [s-1]$, we can apply \Cref{cor:transturan} to $Q(H_i)$ and $Q(H_{i+1})$ with $\beta = 1/2$ to get $2k$-sets $X_i \subs Q(H_i)$ and $Y_{i+1} \subs Q(H_{i+1})$ that induce transitive tournaments and satisfy $X_i \cplt Y_{i+1}$. Let us set $Y_1 = X_1$ and $X_s = Y_s$. Then by \Cref{lem:absorber1cover} each $H_i$ contains the $k$-th power of some spanning path $P_i$ that starts with $k$ vertices in $Y_i$ and ends with $k$ vertices in $X_i$. As $X_i \cplt Y_{i+1}$, the concatenation of these paths satisfies our requirements.
\end{proof}

The next lemma is our tool for finding $k$-absorbers in the tournament. Its proof heavily uses the underlying median ordering, but at this point, it is simply a combination of previously established lemmas.

\begin{lemma} \label{lem:absorbfree}
Suppose that in a median ordering of some tournament $T$, an interval is split into subintervals $A_0\prec \dots \prec A_t$ of size $m\ge 2^{81000k}$ each, where $t\ge 80$. If there are sets $X_0\subs A_0$ and $X_t\subs A_t$ of size $|X_0|,|X_t|\ge 8k$ that both induce transitive subtournaments and $X_t\cplt X_0$, then $T$ contains a $k$-absorber.
\end{lemma}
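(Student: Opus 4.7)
Set $r'=2^{10k}$ and $r=r'+10$; the $k$-absorber $H$ will have $S_0\subs X_0$ and $S_r\subs X_t$, so the wrap-around $S_r\cplt S_0$ is inherited for free from $X_t\cplt X_0$. To build the intermediate transitive $2k$-sets $S_1,\dots,S_{r-1}$ along the median ordering, I subdivide each $A_j$ ($1\le j\le t-1$) into sub-intervals of size $2^{40k}$, producing far more than $r'$ many sub-intervals in total. By \Cref{lem:transitive} the first sub-interval contains a $16k$-transitive set; feeding this into \Cref{lem:mediansequence} (with parameter $2k$, whose size hypothesis matches $|A_j|\ge 2^{81000k}$) yields a chain $Z_1\cplt\cdots\cplt Z_M$ of transitive $2k$-sets with $M\gg r'$, and I set $S_i=Z_i$ for $i\in[r-1]$. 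To enforce $S_0\cplt S_1$ and $S_{r-1}\cplt S_r$, I \emph{pre-design} the very first step of \Cref{lem:mediansequence}: by \Cref{lem:mindegturan} applied with $\beta\approx 1/2$ (which is the density $d(X_0,A_1\cup\cdots\cup A_{t-1})$ coming from the median-degree bound \Cref{lem:mediandegrees}), $X_0$ contains a $2k$-subset $S_0$ whose common outneighbourhood in $A_1\cup\cdots\cup A_{t-1}$ has size at least $m/2^{O(k)}$; I choose the starting $16k$-transitive set inside this common outneighbourhood, so that automatically $S_0\cplt Z_1=S_1$. The symmetric choice at the $X_t$ side produces $S_r$ with $S_{r-1}\cplt S_r$, and any residual gaps are closed by \Cref{cor:medtranssequence} (whose $|X|,|X'|\ge 8k$ hypothesis at parameter $2k$ is matched by $|X_0|,|X_t|\ge 8k$), with its at most five extra transitive $2k$-sets absorbed into the chain by re-indexing.

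\textbf{Finding the absorbing vertices.} For each $i\in[r']$ we need $q_i\in N^+(S_0)\cap N^+(S_i)\cap N^-(S_{i+1})\cap N^-(S_{r'+1})$, with the $r'$ choices pairwise disjoint. I search for $q_i$ in the sub-interval $B_i$ hosting $S_{i+1}=Z_{i+1}$, minus $S_{i+1}$ itself. The local condition $q_i\in N^+(S_i)\cap B_i$ is essentially free: inside the proof of \Cref{lem:mediansequence}, $Z_{i+1}$ is extracted via \Cref{lem:medianturan} from the common outneighbourhood of $S_i=Z_i$ in $B_i$, which therefore has size at least $|B_i|/2^{12k}$. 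Restricting further to $N^-(S_{i+1})\cap B_i$ is handled by applying \Cref{lem:mindegturan} inside this common outneighbourhood (since $Z_{i+1}$ is transitive, its common inneighbourhood among the earlier vertices of $B_i$ has density $\ge 2^{-O(k)}$), costing only another factor $2^{-O(k)}$. For the global constraints $q_i\in N^+(S_0)\cap N^-(S_{r'+1})$: the pre-design of $S_0$ ensures $X_0\cplt$ (starting transitive set) and, propagating forward through the chain via \Cref{lem:mediandegrees}, $|N^+(S_0)\cap B_i|\ge |B_i|/2^{O(k)}$; I likewise pick $S_{r'+1}$ among the $Z_j$'s so that $|N^-(S_{r'+1})\cap B_i|\ge |B_i|/2^{O(k)}$ for every earlier $B_i$. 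Combining the four density bounds leaves at least $|B_i|/2^{O(k)}\ge 1$ valid candidates for $q_i$ in each $B_i$, and since the $B_i$'s are pairwise disjoint the $q_i$'s are automatically distinct.

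\textbf{Main obstacle.} The delicate issue is reconciling the two \emph{global} choices (of $S_0$ and $S_{r'+1}$) with all $r'=2^{10k}$ \emph{local} pairs $(S_i,S_{i+1})$ at once: $S_0$ and $S_{r'+1}$ are fixed once and for all, yet their common out/in-neighbourhoods must be dense inside \emph{every} candidate sub-interval $B_i$, not merely on average across the partition, so that the four-fold intersection defining the admissible $q_i$'s is non-empty for every $i\in[r']$. Meeting this uniformity requires the upfront dependent-random-choice step (\Cref{lem:mindegturan}) to be combined with uniform median-degree estimates from \Cref{lem:mediandegrees}, possibly restricting the chain to ``good'' sub-intervals where the common neighbourhoods are a priori dense; without this uniformity, a pathological distribution of the global common neighbourhoods could concentrate all density in a few sub-intervals and leave most pairs $(S_i,S_{i+1})$ with no valid $q_i$, blocking the construction.
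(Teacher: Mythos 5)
You correctly identify that the wrap-around $S_r\cplt S_0$ can be inherited from $X_t\cplt X_0$, and you also correctly identify the genuine obstacle: with $S_0$ fixed by a single dependent-random-choice step, there is no reason for $N^+(S_0)$ to be dense inside \emph{every} tiny sub-interval $B_i$. \Cref{lem:mediandegrees} controls out-degrees only into the union of later intervals, not uniformly into each one, so a vertex of $S_0$ could in principle have all of its outneighbours concentrated in a few sub-intervals; there is no lemma in the paper (and no obvious one) that supplies the uniform estimate $|N^+(S_0)\cap B_i|\ge |B_i|/2^{O(k)}$ for every $i\in[r']$ you invoke. Since you explicitly leave this unresolved, the proposal has a real gap.

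The paper sidesteps the issue entirely by never spreading $S_1,\dots,S_{r'}$ and $Q$ along the median ordering. Instead it finds a single large set $Y'_{i_1}$ inside one interval $A_{i_1}$ such that $S_0\cplt Y'_{i_1}$ and $Y'_{i_1}\cplt X_{i_2}$ (where $S_{r'+1}$ will live), via \Cref{lem:mindegturan} and \Cref{lem:transturan}. It then invokes \Cref{thm:tournamentpath} to find the $4k$-th power of a path $v_1\cdots v_\ell$ of length $\ell=r'(2k+1)$ \emph{inside} $Y'_{i_1}$, reads off $S_i$ and $q_i$ as consecutive blocks of this path power, and gets $S_i\cplt q_i\cplt S_{i+1}$ from the $4k$-th-power structure while the global conditions $S_0\cplt Q\cplt S_{r'+1}$ are immediate from complete domination. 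The rest (\Cref{cor:medtranssequence} to close the cycle) matches what you do. So the missing idea is: confine the absorbing part $Q$ and all of $S_1,\dots,S_{r'}$ to one set that is completely dominated by $S_0$ and completely dominates $S_{r'+1}$, and build the fine path-power structure inside it using \Cref{thm:tournamentpath} rather than trying to embed $q_i$'s one sub-interval at a time.
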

\begin{proof}
By \Cref{lem:mediandegrees}, every vertex in $X_0$ has at least $m/2$ outneighbours in $A_1\cup A_2$. Then \Cref{lem:mindegturan} with $\beta=1/4$ gives subsets $X'_0\subs X_0$ and $Y \subs A_1\cup A_2$ of size $|X'_0|\ge 2k$ and $|Y|\ge 2m/2^{16k}$ such that $X'_0 \cplt Y$. At least half of $Y$ must lie in the same $A_{i_1}$ for some $i_1=1$ or $i_1=2$, so $Y_{i_1}= Y\cap A_{i_1}$ has size $|Y_{i_1}|\ge m/2^{16k}$.

Similarly, \Cref{lem:mediandegrees} implies that every vertex in $Y_{i_1}$ has at least $m/2$ outneighbours in $A_{i_1+1}\cup A_{i_1+2}$, so $d(Y_{i_1},A_{i_2}) \ge 1/4$ for $i_2=i_1+1$ or $i_2=i_1+2$. We can thus apply \Cref{lem:transturan} with $\beta=1/4$ to find sets $Y'_{i_1}\subs Y_{i_1}$ and $X_{i_2}\subs A_{i_2}$ of size $|Y'_{i_1}| \ge |Y_{i_1}|/2^{64k}\ge m/2^{80k}$ and $|X_{i_2}| \ge 8k$ such that $X_{i_2}$ induces a transitive tournament in $T$, and $Y'_{i_2} \cplt X_{i_2}$.

We will construct the $k$-absorber as follows. We set $S_0=X'_0$, and $S_{r'+1}$ (where $r'=2^{10k}$) will be a $k$-subset of $X_{i_2}$ defined later. We find the sets $S_1,\dots, S_{r'}$ and $Q$ in $Y'_{i_1}$ by applying \Cref{thm:tournamentpath} to obtain a path $v_1v_2 \dots v_{\ell}$ in $Y'_{i_1}$ with $\ell = r'(2k+1) \le m/2^{90k} \le |Y'_{i_1}|/2^{10k}$ vertices, whose $4k$-th power is in the tournament. Let us define $S_i= \{v_{i(2k+1)-1}, \dots , v_{i(2k+1)-2k}\}$ and $q_i= v_{i(2k+1)}$ for every $i=[r']$, and let $Q=\{q_1,\dots, q_{r'}\}$. Then we know that each $S_i$ induces a transitive tournament of size $2k$, and we have $S_0 \cplt Q \cplt S_{r'+1}$ and $S_0\cplt S_1 \cplt \dots \cplt S_{r'+1}$, as well as $S_i\cplt q_i\cplt S_{i+1}$ for every $i\in[r']$.

The crucial part of the construction is closing the cycle. To do so, we apply \Cref{cor:medtranssequence} to the interval $A_{i_2} \cup \dots \cup A_t$ with $X=X_{i_2}$, $X'=X_t$, $F=\emptyset$, and $2k$ in the place of $k$. As $t-i_2\ge 60$, $|X_{i_2}|,|X_t| \ge 8k$, and each $A_i$ has size $m \ge 2^{81000k}$, there are sets $S_{r'+1} \subs X_{i_2}$, $S_{r'+s+1} \subs X_t$ and $S_{r'+2},\dots, S_{r'+s} \subs A_{i_2+1} \cup \dots \cup A_{t-1}$ such that each of $S_{r'+1}, \dots , S_{r'+s+1}$ has size $2k$, induces a transitive subtournament, and $S_{r'+1} \cplt \dots \cplt S_{r'+s+1}$.

By construction, we have $S_{r'+s+1} \cplt S_0$, so we are done with $r=r'+s+1$.
\end{proof}

\section{Proof of the partitioning theorem}\label{secpartition}

\begin{theorem}
Every $n$-vertex tournament $T$ can be covered with at most $2^{10^5k}$ vertex-disjoint $k$-th powers of directed paths.
\end{theorem}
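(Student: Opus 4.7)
The plan is to take a median ordering of $T$ and greedily extract a large family of vertex-disjoint $k$-absorbers, which all combine into a single $k$-th power of a directed path via \Cref{lem:absorbers}. The remaining vertices, those not covered by any absorber, will be handled by an additional $2^{O(k)}$ path powers, yielding the desired bound.

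Concretely, if $n \le 2^{10^5 k}$, I cover $T$ by $n$ singleton vertices, each a degenerate $k$-th power of a path, and we are done. Otherwise, I fix a median ordering of $T$ and consider it as a sequence of consecutive windows, each comprising $t \ge 80$ sub-intervals of size $m \ge 2^{81000 k}$. In every such window, I search for $8k$-sized transitive subsets $X_0$ at the start and $X_t$ at the end with $X_t \cplt X_0$ (using \Cref{cor:transturan} applied to the backward density) and invoke \Cref{lem:absorbfree} to extract a $k$-absorber. This is iterated greedily across the entire ordering, yielding a maximal family of disjoint $k$-absorbers. This family combines via \Cref{lem:absorbers} into a single $k$-th power of a path $P$.

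The leftover $U = V(T) \setminus V(P)$ has a strong structural constraint: by maximality of the greedy extraction, no sufficiently long interval in the induced median ordering on $U$ admits backward transitive $8k$-subsets, for otherwise another $k$-absorber could be extracted from $U$, contradicting greediness. Via the contrapositive of \Cref{cor:transturan}, this forces the forward density between distant sub-intervals of $U$ to be very close to $1$. Exploiting this strong forward-dominance, $U$ can be covered by a bounded number of $k$-th power paths using \Cref{thm:tournamentpath}, \Cref{cor:medtranssequence}, and the chaining tools of Section 2.

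The main obstacle is bounding the number of path powers needed for $U$ to at most $2^{10^5 k} - 1$, independently of $n$. A naive iteration of \Cref{thm:tournamentpath} would yield an $n$-dependent count, so the argument must exploit the forward-dominance of $U$ more aggressively, showing for instance that a single application of a strengthened chaining statement (built from \Cref{lem:mediansequence} and \Cref{cor:medtranssequence}) covers all but $2^{O(k)}$ vertices of $U$, with the few remaining vertices covered as singletons. The delicate combinatorial step is verifying that the absence of extractable $k$-absorbers really does force such near-complete coverability; the constant $10^5$ in the exponent absorbs the various exponents appearing in the absorber lemma ($2^{81000k}$), the median sequence corollary ($2^{40400k}$), and the chaining of transitive subsets.
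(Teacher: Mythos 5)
Your high-level plan is aligned with the paper's: remove a maximal family of vertex-disjoint $k$-absorbers (which \Cref{lem:absorbers} packs into a single path power), then use the absence of further absorbers to impose a strong forward structure on what remains. But there are two concrete problems, one of which you acknowledge but do not resolve.

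First, a technical but crucial issue with the median ordering. You extract absorbers from windows of a median ordering of $T$, and then reason about ``the induced median ordering on $U$''. That induced ordering is in general \emph{not} a median ordering of the subtournament $T[U]$: after deleting the absorber vertices, swaps that did not help before may now increase the forward edge count. Every tool you then want to apply---\Cref{lem:mediandegrees}, \Cref{lem:medianturan}, \Cref{lem:mediansequence}, \Cref{cor:medtranssequence}, \Cref{lem:absorbfree}---requires the ordering to be median for the tournament it is applied to. The correct move (as in the paper) is to take the maximal absorber family first (no windowing needed; ordinary maximality by inclusion suffices and is cleaner than your greedy window search, which might miss absorbers straddling window boundaries), set $T' = T - \bigcup_i V(H_i)$, and then take a \emph{fresh} median ordering of $T'$. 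Only then does \Cref{lem:absorbfree} give the contrapositive you want: for intervals $A_i$, $A_{i'}$ in this ordering with $i' \ge i + 80$, there are no $8k$-sets $B\subs A_i$, $B'\subs A_{i'}$ with $B'\cplt B$ and both transitive.

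Second, and more seriously, the entire content of the theorem is in the step you defer: covering $U$ with a number of path powers that is bounded in $n$. You correctly observe that naive iteration of \Cref{thm:tournamentpath} gives $n$-dependent bounds, and you gesture at ``a strengthened chaining statement'' built from \Cref{lem:mediansequence} and \Cref{cor:medtranssequence}, but you do not construct it, and it is genuinely nontrivial. The paper does this in two layers: repeatedly applying \Cref{lem:transsequence} to intervals $A_1, A_{81}, A_{161}, \dots$ (and their $80$ shifts) to tile each $A_i$ almost completely with $8k$-sets $X^h_i$ forming chains $X^h_w \cplt X^h_{w+80} \cplt \cdots$, reducing the per-interval leftover $U_i$ to size $< 2^{80k}$; then, for each leftover vertex $u_{i,j}$, locating large disjoint in- and out-neighbourhoods $N^\pm_{i,j}$ in intervals at median distance $40$--$80$ (via \Cref{lem:mediandegrees}), and chaining $N^-_{w,j}, N^+_{w,j}, N^-_{w+240,j}, \dots$ again with \Cref{lem:transsequence} so that each leftover vertex can be slotted into one of $240m'$ additional path powers without destroying the $X^h_i$ chains (each such chain loses at most one vertex from each $X^h_i$, which is harmless since $|X^h_i| = 8k$). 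The final count $240m' + 80r + m + 1 < 2^{10^5k}$ depends on these careful disjointness bookkeeping arguments, none of which appears in your sketch. As written, the proposal establishes the scaffolding but not the theorem.
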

\begin{proof}
Let $H_1,\dots, H_s$ be a maximal collection of vertex-disjoint $k$-absorbers in $T$. By \Cref{lem:absorber1cover}, the vertices $V(H_1)\cup \dots \cup V(H_s)$ can be covered by the $k$-th power of a single directed path $P$. Let $T' = T - (H_1\cup\dots \cup H_s)$ be the subtournament induced by the remaining vertices, and let $n'=|V(T')|$.

Take a median ordering $\prec$ of $T'$, and let us split the vertices into subintervals of size $m=2^{81000k}$. More precisely, we split $V(T')$ into intervals $A_0\prec A_1\prec \dots \prec A_t$ where $t=\floor{n'/m}$, $|A_i|=m$ for $i\in [t]$, and $|A_0| < m$. We can afford to use a single path for each vertex in $A_0$, so let us focus on covering $A_1\cup \dots\cup A_t$.

As $T'$ does not contain any $k$-absorbers, \Cref{lem:absorbfree} tells us that there cannot be indices $i,i'\in [t]$ such that $i'\ge i+80$, and $B' \cplt B$ for some $8k$-sets $B\subs A_i$ and $B'\subs A_{i'}$ that both induce transitive subtournaments. We can therefore apply \Cref{lem:transsequence} to the sets $A_1,A_{81}, A_{161}, \dots$ to find $8k$-subsets $X_1\subs A_1, X_{81} \subs A_{81}, X_{161}\subs A_{161}, \dots$ that each induce transitive subtournaments, and $X_1\cplt X_{81} \cplt X_{161}\cplt \dots$. In fact, we can then apply it again to the sets $A_1\sm X_1, A_{81}\sm X_{81}, A_{161} \sm X_{161}, \dots$ to find another sequence of such $8k$-sets $X'_1\subs A_1, X'_{81} \subs A_{81}, X'_{161}\subs A_{161}, \dots$ disjoint from the $X_i$, and repeat this as long as each $A_i$ contains at least $2^{80k}$ unused vertices.

This way we can find subsets $X^h_i \subs A_i$ of size $8k$ for every $i\in [t]$ and $h = 1,\dots, r$ for some $r$, such that these sets are pairwise disjoint, each of them induces a transitive subtournament, $X^h_{w} \cplt X^h_{w+80} \cplt X^h_{w+160} \cplt \dots$ for every $h\in [r]$ and $w\in [80]$, and $2^{81000k}-8kr < 2^{80k}$, i.e., the set $U_i \subs A_i$ of vertices left uncovered by the $X^h_i$ has size $|U_i|< 2^{80k}$ for every $i\in [r]$. Note that each sequence $X^h_w, X^h_{w+80}, \dots$ contains the $k$-th power of a \emph{spanning} path, and this holds even if we remove at most $7k$ arbitrary vertices from each $X^h_i$. It is therefore enough to cover the remaining vertices $U_1 \cup \dots \cup U_t$ with $k$-th powers of paths, and we can even use some vertices from the $X^h_i$ for this purpose.

Let $m'= 2^{81000k}-8kr < 2^{80k}$ be the size of each $U_i$, and denote its vertices as $U_i = \{u_{i,1},\dots, u_{i,m'}\}$. By \Cref{lem:mediandegrees}, $u_{i,j}$ has at least $79m/2$ outneighbours in the interval $A_{i+1}\cup \dots \cup A_{i+80}$, at least $m/2$ of which must be in $A_{i+40} \cup \dots \cup A_{i+80}$. In particular, at least $m/160$ of these outneighbours must fall in some $A_{\alpha(i,j)}$ with $i+40\le \alpha(i,j)\le i+80$, and similarly, $u_{i,j}$ has at least $m/160$ inneighbours in some $A_{\beta(i,j)}$ with $i-80 \le \beta(i,j) \le i-40$. Note that a given index $\gamma \in [t]$ can appear as $\alpha(i,j)$ or $\beta(i,j)$ for no more than $160m'$ different vertices $u_{i,j}$, so we can choose subsets of in- and outneighbourhoods of size $m/(160^2 m')\ge m/2^{100k}$ so that they are all disjoint. Also, each of these subsets must come from at least $(m/2^{100k})/8k \ge 2^{80000k}$ different $8k$-sets $X^h_{\gamma}$.
All in all, for every $u_{i,j}$, we can find a set $N^+_{i,j} \subs A_{\alpha(i,j)}$ of outneighbours and another subset $N^-_{i,j} \subs A_{\beta(i,j)}$ of inneighbours for $u_{i,j}$ such that these are pairwise disjoint subsets of size at least $2^{80000k}$ each, and they together contain at most one vertex from each $8k$-set $X^h_{\gamma}$.

Recall that there are no indices $i,i'$ with $i'\ge i+80$ such that $B'\cplt B$ for some $8k$-sets $B\subs A_i$ and $B'\subs A_{i'}$. As $\alpha(i,j)\ge \beta(i,j)+80$ and $\beta(i+240,j)\ge \alpha(i,j)+80$ for every $i,j$, this means that we can apply \Cref{lem:transsequence} to the sequence $N^-_{w,j}, N^+_{w,j}, N^-_{w+240,j}, N^+_{w+240,j}, N^-_{w+480,j}, N^+_{w+480,j}, \dots$ to find $8k$-sets $Y^-_{i,j} \subs N^-_{i,j}$ and $Y^+_{i,j} \subs N^+_{i,j}$ that induce transitive tournaments, and satisfy $Y^-_{w,j} \cplt Y^+_{w,j} \cplt Y^-_{w+240,j} \cplt Y^+_{w+240,j} \cplt Y^-_{w+480,j} \cplt Y^+_{w+480,j} \cplt \dots$, for every $w\in[240]$ and $j\in [m']$. As $Y^-_{i,j} \cplt u_{i,j} \cplt Y^+_{i,j}$, we can cover the vertices $Y^-_{w,j} \cplt u_{w,j} \cplt Y^+_{w,j} \cplt Y^-_{w+240,j} \cplt u_{w+240,j} \cplt Y^+_{w+240,j} \cplt Y^-_{w+480,j} \cplt u_{w+480,j}  \cplt Y^+_{w+480,j} \cplt \dots$ with the $k$-th power of a single path.

Putting everything together, we see that we can cover $U_1\cup \dots \cup U_t$ (and the vertices in $Y^{\pm}_{i,j}$) with the $k$-th powers of $240m'$ vertex-disjoint paths. As noted above, these paths use at most one vertex from each $X^h_i$, so we can cover the remaining vertices in $T'$ with the $k$-th powers of $80r$ vertex-disjoint paths, plus one path for each vertex in $A_0$. Together with $P$, we obtain no more than $240m' + 80r + m + 1 \le 2^{90k} + 2^{81010k} + 2^{81000k} + 1 < 2^{10^5 k}$ vertex disjoint paths whose $k$-th powers cover all vertices of $T$.
\end{proof}

\section{Proof of the existence of a long cycle power}\label{seccycle}

First, we show that the bounds in \Cref{thm:cycles} are tight up to the implied constants. 
We can easily construct a roughly $\eps$-\intrans tournament on $n$ vertices with no cycle of length $5\eps n$, as follows. Split the $n$ vertices into $m=1/(4\eps)$ parts of equal size, say $A_1,\dots,A_m$. Let $T_i$ be a random tournament on $A_i$ for every $i$, and orient all $A_i$-$A_j$ edges from $A_i$ to $A_j$ when $i<j$. It is easy to see that a random tournament is roughly $1/4$-\intrans, so $T$ must be roughly $\eps$-\intrans. On the other hand, every cycle intersects at most one of the $T_i$, so its length is at most $4\eps n$. 

We now prove that the bound on the order of the tournament in \Cref{thm:cycles} is tight up to an absolute constant.
\begin{lemma}
For every $k\ge 300$ and $0<\eps<1/8$, there is an $\eps$-\intrans tournament $T$ on at least $\eps^{-k/50}$ vertices that does not contain the $k$-th power of any cycle of length longer than $k$.
\end{lemma}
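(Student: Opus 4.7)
The plan is to construct $T$ as a multi-block random tournament. I would partition the vertex set of $T$ into $m = \lfloor 1/(4\eps) \rfloor$ disjoint blocks $B_1, \dots, B_m$ of size $N$ each (where $N$ is taken slightly below $2^k$, giving $n = mN$), place an independent uniformly random tournament on each block, and orient every edge between $B_i$ and $B_j$ with $i<j$ from $B_i$ to $B_j$.

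First I would verify that $T$ contains no $k$-th power of a cycle of length greater than $k$. Since all inter-block edges go ``forward'' in the block order, every directed cycle (and hence every $k$-th power of a cycle) must lie inside a single block. A standard first-moment computation bounds the expected number of copies of the $k$-th power of $C_\ell$ in a uniformly random tournament on $N$ vertices by at most $(N/2^k)^\ell/\ell$ for each $\ell \ge 2k+1$ (each such labeled copy imposes $k\ell$ independent orientations, and there are at most $N^\ell/\ell$ labeled embeddings). Choosing $N$ a constant factor below $2^k$, say $N = 2^{k-3}$, makes this sum exponentially small in $k$, so after a union bound over the $m$ blocks and, if necessary, the deletion of $o(N)$ vertices per block, every block is free of the forbidden configurations.

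Next I would show $\eps$-intransitivity. A uniformly random tournament on $N$ vertices has feedback arc set at least $(1/4 - o(1))N^2$ with high probability, by standard concentration for random tournaments. Thus, for any ordering of $V(T)$, the induced ordering on each block contributes at least $(1/4-o(1))N^2$ backward edges, summing across the $m$ blocks to at least $(1/4 - o(1)) mN^2 = (1-o(1))n^2/(4m)$ backward edges in total. The choice $m \le 1/(4\eps)$ ensures this is at least $\eps n^2$, giving the required intransitivity.

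Finally, one checks the size bound. With $N = 2^{k-3}$ and $m \approx 1/(4\eps)$, we have $n = mN \gtrsim 2^{k-5}/\eps$, so $n \ge \eps^{-k/50}$ reduces to $2^{k-5} \ge \eps^{1-k/50}$; given $k \ge 300$ and $\eps \in (0, 1/8)$, this follows by direct estimation of the two sides.

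The main obstacle is the balance of parameters: the block size $N$ must be just small enough that the first-moment bound actually rules out long cycle powers in each block, yet large enough that $mN$ exceeds the target $\eps^{-k/50}$. The hypotheses $k \ge 300$ and $\eps < 1/8$ give precisely the slack needed for both conditions; a secondary subtlety, handled by standard concentration, is the uniform control of the $(1/4-o(1))$-intransitivity across all $m$ random blocks.
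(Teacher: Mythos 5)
Your construction produces a tournament whose size is too small to meet the required bound, and the final ``direct estimation'' is in fact false. With $m \approx 1/(4\eps)$ blocks and block size $N\approx 2^{k-3}$ you get $n = mN \approx 2^{k-5}/\eps$, and the inequality you need is $2^{k-5} \ge \eps^{1-k/50}$, i.e.\ $k-5 \ge (k/50-1)\log_2(1/\eps)$. Since $k/50-1\ge 5$, this forces $\log_2(1/\eps) \le 50(k-5)/(k-50)$, which is bounded (about $59$ at $k=300$, tending to $50$ as $k\to\infty$). Thus the inequality fails completely for small $\eps$, e.g.\ $\eps = 2^{-100}$, $k=300$: the left side is $2^{295}$ while the right side is $2^{500}$. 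The constraint is not a matter of tuning constants: to keep each random block free of the $k$-th power of any cycle longer than $k$ you must take $N\lesssim 2^k$ (the first-moment bound you state already shows this is essentially forced at $\ell = k+1$), while $\eps$-intransitivity forces $m \lesssim 1/(4\eps)$, so the block construction can never exceed $n = O(2^k/\eps)$ vertices — a quantity that is polynomial in $1/\eps$, whereas $\eps^{-k/50}$ grows like $(1/\eps)^{k/50}$.

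The paper sidesteps this obstruction with a genuinely different random model. Instead of planting large random blocks, it starts from the transitive tournament on $n = \lceil\eps^{-k/50}\rceil$ vertices and reverses each edge independently with probability $2\eps$. This keeps the tournament right at the intransitivity threshold (Chernoff plus a union bound over the $n!$ orderings shows $\eps$-intransitivity with probability $\ge 1/2$), while the ``backward'' edges relative to the transitive order form a sparse random graph of density $2\eps$. The key structural claim is that any $k$-th power of a cycle of length $\ge k$ would force a $K_{k/10,k/10}$ entirely of backward edges, and a first moment over such bicliques gives a bound like $n^{k/5}(2\eps)^{k^2/100}$, which is small precisely when $n \le \eps^{-\Theta(k)}$. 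This is what lets the size grow to $\eps^{-k/50}$, which your construction cannot reach. If you want to rescue your approach you would need either much larger blocks (but then each block already contains a long cycle power) or many more blocks (but then intransitivity fails), so the argument must be replaced rather than patched.
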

\begin{proof}
Let $T$ be a transitive tournament on $n=\lceil \eps^{-k/50}\rceil$ vertices, and denote the transitive ordering by $\tau$. Let $R(T)$ be a random tournament obtained by independently reversing each edge with probability $2\eps$.  

First, we show that $R(T)$ is $\eps$-\intrans with probability at least $1/2$. In any fixed ordering $\pi$ of the vertices, there are either at least $n^2/4$ forward edges or $n^2/5$ backward edges. Either way, a standard application of the Chernoff bounds shows that the probability that $R(T)$ contains fewer than $\eps n^2$ backward edges is less than $e^{-\eps n^2/100}$. There are $n!$ orderings, so the probability that $R(T)$ is not $\eps$-\intrans is at most $n! e^{-\eps n^2/100} <e^{n\log n - \eps n^2/100} < 1/2$ using $\eps n>200\log n$ (which is easy to check with the given parameters).

\begin{claim}
If $R(T)$ contains the $k$-th power of a cycle of length at least $k$, then the backward edges of $R(T)$ with respect to $\tau$ contain a copy of $K_{k/10,k/10}$.
\end{claim}
\begin{proof}
Let $C\subs R(T)$ be the $k$-th power of a cycle of length at least $k$. Let $x_1,\dots, x_m$ be an ordering of the vertices of $C$ such that $x_i$ sends edges to $x_{i+1},\dots, x_{i+k}$ for every $i\in [m]$ (and taking indices modulo $m$).

Split $C$ into consecutive intervals of size $k/2$, say $A_1,\dots, A_{\floor{2m/k}}$ (ignoring any leftover vertices), so that $A_1\cplt A_2\cplt \cdots \cplt A_{\floor{2m/k}}\cplt A_1$.
If $A_2$ contains at least $k/10$ vertices that precede some $k/10$ vertices in $A_1$ in $\tau$, then we have the desired $K_{k/10,k/10}$ consisting of backward edges. Hence, we may assume that there are sets $A'_1\subs A_1$ and $A'_2\subs A_2$ of size at least $2k/3$ such that $A'_1 \prec A'_2$. By applying the same argument to $A'_2$ and $A_3$, either we find a $K_{k/10,k/10}$ consisting of backward edges, or there are sets $A''_2\subs A'_2$ and $A'_3\subs A_3$ such that $A''_2$ has size $k/10$, $A'_3$ has size $2k/3$, and $A''_2 \prec A'_3$. Continuing in the same fashion, we obtain a sequence $A'_1 \prec A''_2 \prec \dots \prec A'_{\floor{2m/k}}$. Now $A'_{\floor{2m/k}}\cplt A'_1$ gives a $K_{k/10,k/10}$ of backward edges, as we wanted to show. 
\end{proof}

To finish the proof, it is enough to show that with probability at least $1/2$, there is no complete bipartite $K_{k/10,k/10}$ consisting of backward edges in $\tau$, and hence $R(T)$ does not contains the $k$-th power of any cycle of length at least $k$. There are at most $\binom{n}{k/10}^2$ possible complete bipartite graphs on $n$ vertices, and each of them appears in $R(T)$ with probability $(2\eps)^{k^2/100}$. Now it easy to check that
\[
\binom{n}{k/10}^2 \cdot (2\eps)^{k^2/100}< n^{k/5} \cdot \eps^{k^2/200} = \eps^{-k^2/250+ k^2/200} < 1/2.
\]

This shows that there is an instance of $R(T)$ that is $\eps$-\intrans but does not contain the $k$-th power of any cycle of length at least $k$, as we wanted to show.  
\end{proof}

\medskip
Let us now turn to the proof of \Cref{thm:cycles}. The general idea is the following. We first find two vertex subsets that are relatively far from each other in a median ordering, but are connected by many backward edges. We then use our tools from \Cref{sec:tools} to assemble the $k$-th power of a long cycle as follows. The cycle will start with a complete bipartite graph of backward edges between the two subsets that we can find using \Cref{cor:transturan}. We continue the cycle with forward edges. \Cref{lem:mediansequence} allows us to touch many of the vertices between the two subsets. Finally, we can apply \Cref{cor:medtranssequence} to close the cycle. This method yields a cycle of length $c_k\eps n$. In order to make the constant independent of $k$, we will repeat the above argument several times, ``wrapping around'' the two subsets.

\medskip
Our main tool for finding two sets with many backward edges is the following density-increment lemma, which is inspired by a similar tool of Long \cite[Lemma 5]{L17}.

We define the \emph{length} $\len_\tau(e)$ of an edge $e=v_iv_j$ with respect to a given ordering $\tau = v_1\prec \dots \prec v_n$ of the vertices as the distance $|i-j|$ of its endpoints.

\begin{lemma} \label{lem:densityincrement}
Let $0<\eps\le 1/4$ and let $T$ be a tournament on $n$ vertices that is $\eps$-\intrans. Then for every $0<c<1/3$, $T$ satisfies at least one of the following properties. 
\begin{enumerate}[(P1)]
    \item \label{itm:p1} In every median ordering $\tau$ of $T$, there is a set $E_{\tau}$ consisting of backward edges such that $|E_{\tau}|\ge  c\eps n^2/4$ and $\len_{\tau}(e)\geq cn/4$, for every $e\in E_{\tau}$.
    \item \label{itm:p2} There is a sub-tournament $T'\subs T$ of order at least $n/2$ such that $T'$ is $2(1-c)\eps$-\intrans.
\end{enumerate}
\end{lemma}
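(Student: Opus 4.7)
The plan is to prove the contrapositive: I assume (P1) fails and derive (P2). So suppose there is a median ordering $\tau=v_1\prec\cdots\prec v_n$ of $T$ in which the number of backward edges of length at least $cn/4$ is strictly less than $c\eps n^2/4$. Since $T$ is $\eps$-\intrans and the median ordering minimizes the number of backward edges over all orderings, $\tau$ has at least $\eps n^2$ backward edges in total, so more than $\eps n^2(1-c/4)$ of them are \emph{short}, meaning of length less than $cn/4$.

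The heart of the argument is to locate a consecutive interval $S$ in $\tau$ with $|S|\ge n/2$ containing many backward edges, via averaging over all intervals of length $\lceil n/2\rceil$ in $\tau$. Each short backward edge of length $\ell<cn/4$ is fully contained in at least $n/2-\ell>(2-c)n/4$ such intervals, up to boundary effects at the two ends of $\tau$ which can be absorbed by the slack left by $c<1/3$. Double counting then shows that the average number of short backward edges per interval is at least
\[
\eps n^2(1-c/4)\cdot\frac{(2-c)n/4}{n/2} \;=\; \frac{(1-c/4)(2-c)\eps n^2}{2} \;\ge\; \frac{(1-c)\eps n^2}{2},
\]
where the last inequality rearranges to $1-c/2+c^2/4\ge 0$, which holds trivially. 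Consequently some interval $S$ with $|S|\ge n/2$ contains at least $(1-c)\eps n^2/2\ge 2(1-c)\eps|S|^2$ backward edges with respect to $\tau$.

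To conclude, I would invoke the standard property that the restriction $\tau|_S$ of a median ordering to a consecutive interval is itself a median ordering of $T[S]$; otherwise one could splice a strictly better ordering of $S$ into $\tau$, contradicting the optimality of $\tau$. Consequently every ordering of $V(T[S])$ has at least as many backward edges as $\tau|_S$, namely at least $2(1-c)\eps|S|^2$. Setting $T':=T[S]$ then gives a subtournament of order at least $n/2$ that is $2(1-c)\eps$-\intrans, establishing (P2). The main subtlety to watch is the matching of the length threshold $cn/4$ with the interval size $n/2$: it is exactly this coupling that turns ``most edges short'' into a density jump by a factor of $2(1-c)$. Beyond that, everything is bookkeeping combined with the hereditary property of median orderings on consecutive intervals.
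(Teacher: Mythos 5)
The overall plan — use the contrapositive, exploit that the median ordering has at least $\eps n^2$ backward edges, argue most of them are short (length $<cn/4$), and find an interval $S$ of length about $n/2$ that captures enough of them so that $T[S]$ is $2(1-c)\eps$-intransitive, using the fact that median orderings restrict to median orderings on intervals — is the right skeleton and matches the paper. But the averaging step has a genuine gap: you assert that ``each short backward edge of length $\ell<cn/4$ is fully contained in at least $n/2-\ell$ such intervals, up to boundary effects \dots which can be absorbed by the slack left by $c<1/3$.'' This is not true. A short backward edge whose two endpoints are both among the first few vertices of $\tau$ is contained in only $O(1)$ of the length-$\lceil n/2\rceil$ intervals, not in $\Omega(n)$ of them. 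The slack you point to (the factor-two margin in $(1-c/4)(2-c)\ge(1-c)$) is a fixed multiplicative constant, but the boundary loss can be nearly total: for small $\eps$, \emph{all} of the $(1-c/4)\eps n^2$ short backward edges could be concentrated inside an initial segment of length $cn/2$, in which case $\sum_e s_e$ is an order of $n$ smaller than your bound and the averaging conclusion fails outright (while the lemma is nevertheless true, because in that situation the first half-interval already contains every short backward edge). So the double-counting estimate you write down is simply false without further case analysis.

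The paper avoids this by not averaging at all: it examines three explicit intervals — the two halves $I_1,I_2$ of the ordering and the slightly extended interval $J=\{v_1,\dots,v_{\lfloor(1+c/2)n/2\rfloor}\}$ — and classifies each backward edge according to whether it lies inside $I_1$, inside $I_2$, crosses the midpoint (these are short, hence land in $J$), or is long (handled by the (P1) alternative). Boundary-concentrated edges are then automatically captured by $I_1$ or $I_2$. If you want to salvage the averaging approach, you would need to separately handle edges contained in the first or last $\lceil n/2\rceil$ vertices (which is essentially reintroducing the paper's $I_1,I_2$ case), at which point it loses its advantage over the paper's direct argument.
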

\begin{proof}
Let $\tau = v_1\prec \dots \prec v_n$ be a median ordering of $T$. By assumption, there are at least $\eps n^2$ backward edges in this ordering. Let $E_{\tau}$ be the set of backward edges whose endpoints are at distance at least $cn/4$. If $|E_{\tau}|\ge \frac{c\eps}{4} n^2$, then \ref{itm:p1} is satisfied, so we may assume that $|E_{\tau}| < \frac{c\eps}{4} n^2$. In particular, we may assume that $cn/4>1$ (i.e., $n>4/c$), as otherwise $E_{\tau}$ contains at least $\eps n^2$ backward edges.

Now let $I_1 = \{v_1,\dots, v_{\ceil{n/2}}\}$ and $I_2 = \{v_{\floor{n/2}+1}, \dots, v_n\}$, and let $E_1$ and $E_2$ be the respective sets of backward edges induced by them. We may assume that $|E_1|\ge |E_2|$. Finally, let $F$ be the set of backward edges not contained in either of $E_1,E_2$ and $E_{\tau}$. Then the edges in $F$ must go from $I_2$ to $I_1$ and have length less than $cn/4$, so they are all induced by the interval $J = \{v_1,\dots, v_{\floor{(1+c/2)n/2}}\}$.

We have $|E_1|+|E_2|+|E_{\tau}|+|F| = \eps n^2$ with $|E_1|\ge |E_2|$ and $|E_{\tau}| < \frac{c\eps}{4}n^2$. If $|F| < \frac{c\eps}{4}n^2$, then $|E_1| > \frac{\eps(1-c/2)}{2}n^2$. Using $n>4/c$, it is easy to check that $\frac{cn^2}{2} \ge (1-c)(2n+1)$, so we have $|E_1|\ge  \frac{\eps(1-c)}{2}(n+1)^2 \ge 2(1-c)\eps|I_1|^2$. As $\tau$ is a median ordering of the subtournament induced by any interval, we can choose $T'=T[I_1]$ to satisfy \ref{itm:p2}. Otherwise, $|F| \ge \frac{c\eps}{4}n^2$, so $|E_1|+|F| \ge \eps n^2/2 \ge (1-c)(1+c/2)^2\eps n^2/2 \ge 2(1-c)\eps |J|^2$. We can then choose $T'=T[J]$.
\end{proof}

\begin{corollary} \label{cor:longbackedges}
Let $0<\eps \le 1/4$, and let $T$ be an $n$-vertex tournament that is $\eps$-\intrans. Then, for some $\teps \ge\eps$, $T$ contains a subtournament $\tT$ on $\tn$ vertices that is $\teps$-\intrans such that $\teps\tn\ge \eps n/5$, and for every median ordering $\tau$ of $\tT$ there is a set $E_{\tau}$ consisting of $\teps^2 \tn^2/4$ backward edges such that $\len_{\tau}(e)\ge \teps\tn/4$, for every $e\in E_{\tau}$.
\end{corollary}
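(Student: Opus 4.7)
The plan is to iterate \Cref{lem:densityincrement}, applying it each time with the parameter $c$ equal to the current intransitivity. Set $T_0 = T$, $\eps_0 = \eps$, and $n_0 = n$. At step $i\ge 0$, given a subtournament $T_i\subseteq T$ on $n_i$ vertices that is $\eps_i$-\intrans, invoke \Cref{lem:densityincrement} on $T_i$ with $c := \eps_i$; this is admissible since $\eps_i\le 1/4 < 1/3$. If (P1) holds, terminate. If (P2) holds, take $T_{i+1}\subseteq T_i$ to be the subtournament it produces, so that $n_{i+1}\ge n_i/2$ and $\eps_{i+1} \ge 2(1-\eps_i)\eps_i \ge (3/2)\eps_i$ (using $\eps_i\le 1/4$).

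Since each $\eps_i$ is at most $1/4$ yet grows by a factor at least $3/2$ per iteration, the iteration must terminate at some step $k$ with (P1) holding. Put $\tT = T_k$, $\teps = \eps_k$, and $\tn = n_k$. By monotonicity $\teps \ge \eps_0 = \eps$, and applying (P1) with $c = \teps$ directly furnishes, for every median ordering $\tau$ of $\tT$, a set $E_\tau$ of backward edges with $|E_\tau| \ge \teps\cdot\teps\cdot\tn^2/4 = \teps^2\tn^2/4$ and $\len_\tau(e)\ge \teps\tn/4$ for each $e \in E_\tau$, exactly as required.

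The only nontrivial check is the bound $\teps\tn \ge \eps n/5$. Each (P2) step gives $\eps_{i+1}n_{i+1}\ge (1-\eps_i)\eps_i n_i$, so telescoping yields
\[ \teps\tn \ge \eps n \cdot \prod_{i=0}^{k-1}(1-\eps_i). \]
The geometric growth $\eps_{i+1}\ge (3/2)\eps_i$ implies $\eps_i \le (2/3)^{k-1-i}\eps_{k-1}\le (2/3)^{k-1-i}/4$, so $\sum_{i=0}^{k-1}\eps_i \le 3/4$. The elementary inequality $\prod_i(1-x_i)\ge 1-\sum_i x_i$ for $x_i\in[0,1]$ then gives $\prod_{i=0}^{k-1}(1-\eps_i)\ge 1/4$, whence $\teps\tn\ge \eps n/4 \ge \eps n/5$.

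The main obstacle is this product bound: the iteration can require up to $\Theta(\log(1/\eps))$ passes, so one must show that the cumulative multiplicative loss in $\teps\tn$ is controlled by an absolute constant. The geometric growth of $\eps_i$ is precisely what keeps $\sum\eps_i$ bounded and thus rescues the estimate.
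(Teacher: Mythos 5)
Your argument is correct and follows essentially the same route as the paper: iterate \Cref{lem:densityincrement} with $c$ equal to the current intransitivity, observe that the $\eps_i$ grow geometrically while staying below $1/4$ so the iteration stops at a tournament satisfying (P1), and control the loss in $\teps\tn$ by bounding $\sum\eps_i\le 3/4$. The only cosmetic difference is that you invoke the Weierstrass inequality $\prod(1-\eps_i)\ge 1-\sum\eps_i\ge 1/4$ where the paper uses $1-x\ge e^{-2x}$ to get $\prod(1-\eps_i)\ge e^{-3/2}$; both suffice, and yours in fact yields the slightly sharper constant $\eps n/4$.
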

\begin{proof}
Let us repeatedly apply \Cref{lem:densityincrement} with $c=\eps$ as long as \ref{itm:p2} holds, i.e., let $T=T_0 \sups T_1 \sups \dots \sups T_p$ be a longest sequence of tournaments such that $n_i = |V(T_i)| \ge |V(T_{i-1})|/2$ and $T_i$ is $\eps_i$-\intrans for some $\eps_i\ge 2(1-\eps_{i-1})\eps_{i-1}$, for every $i\in [p]$. We claim that $\tT=T_p$ satisfies the conditions.

To see this, first note that if $T_i$ is $\eps_i$-\intrans, then $\eps_i \le 1/4$. Since $\eps_{i-1} \le \frac{2}{3} \eps_i$ for every $i\in[p]$, we have $\sum_{i=0}^p \eps_i \le \eps_p \sum_{i=0}^p (2/3)^i \le 3/4$.
We also know that $n_p \ge n_{p-1}/2 \ge \dots \ge n_0/2^p$ and that 
\[
1/4 \ge \eps_p \ge 2\eps_{p-1}(1-\eps_{p-1}) \ge \dots \ge 2^p \eps_0 \prod_{i=0}^{p-1} (1-\eps_i) \ge 2^p \eps e^{- 2\sum \eps_i} \ge 2^p\eps/e^{3/2},
\]
so $\teps\tn \ge \eps n/e^{3/2} > \eps n/5$ for $\teps=\eps_p$ and $\tn=n_p$. 

As we chose a maximal sequence of tournaments, $\tT$ does not satisfy \ref{itm:p2} with $c=\teps$. But then \Cref{lem:densityincrement} implies \ref{itm:p1} for $\tT$ with $c=\teps$, which is exactly what we wanted to show.
\end{proof}

We are now ready to prove our theorem.

\begin{theorem}
Every $\eps$-\intrans tournament on $n\ge \eps^{-41000k}$ vertices contains the $k$-th power of a cycle of length at least $\eps n/1500$.
\end{theorem}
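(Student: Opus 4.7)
The plan is to follow the strategy sketched in the paragraph preceding the theorem. First, I would apply \Cref{cor:longbackedges} to $T$, passing to a subtournament $\tT$ on $\tn$ vertices that is $\teps$-\intrans with $\teps\tn \ge \eps n/5$ and such that in every median ordering of $\tT$ there are at least $\teps^2\tn^2/4$ backward edges of length $\ge \teps\tn/4$. It then suffices to produce the $k$-th power of a cycle of length $\Omega(\teps\tn)$ in $\tT$. Fix a median ordering of $\tT$, and partition its vertex set into consecutive intervals $J_1 \prec \cdots \prec J_L$ of common size $m$, where $m$ is chosen just large enough for our interval-based lemmas to apply (roughly $m \ge 2^{40400k}$). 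A standard averaging argument on the long backward edges yields two indices $a < b$ with $b - a \ge \teps\tn/(4m)$ (hence easily bigger than $60$) such that the backward-edge density $d(J_b, J_a)$ is at least $\Omega(\teps^2)$.

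Next, I would build the cycle from $r$ \emph{wraps} stitched together through this dense bipartite structure. For each wrap $i \in [r]$, I would apply \Cref{cor:transturan} (to the reverse orientation) on the still-unused portion of $J_b$ and $J_a$, producing a pair of transitive $k$-sets $X^{(i)} \subs J_b$ and $Y^{(i)} \subs J_a$ with $X^{(i)} \cplt Y^{(i)}$; as long as $rk$ stays much smaller than $\teps^2 m$, the residual bipartite density remains sufficient for \Cref{cor:transturan} at every step. Starting from $Y^{(i)}$ (extended to a transitive $8k$-set inside a larger transitive subset of $J_a$, obtained by \Cref{lem:transitive}), I would apply \Cref{lem:mediansequence} to build the $k$-th power of a long path traversing nearly all the sub-intervals between $J_a$ and $J_b$, declaring as the forbidden set $F$ the union of vertices used in previous wraps; the condition $|F \cap J_j| \le m/8$ is automatically preserved whenever $rk \le m/8$. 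Finally, I would invoke \Cref{cor:medtranssequence} on the last few sub-intervals adjacent to $J_b$ to connect the endpoint of this long path onto (a $4k$-extension of) $X^{(i+1)}$, with indices taken modulo $r$.

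Concatenating the pieces in the cyclic order $X^{(1)} \cplt Y^{(1)} \cplt \mathrm{forward}^{(1)} \cplt X^{(2)} \cplt Y^{(2)} \cplt \mathrm{forward}^{(2)} \cplt \cdots \cplt X^{(r)} \cplt Y^{(r)} \cplt \mathrm{forward}^{(r)} \cplt X^{(1)}$ yields a cyclic sequence of transitive $k$-blocks whose consecutive pairs are in the $\cplt$-relation, so it is indeed the $k$-th power of a cycle. The within-wrap $\cplt$-relations come from the transitivity of the $X^{(i)}, Y^{(i)}$ and from \Cref{lem:mediansequence}/\Cref{cor:medtranssequence}, while the between-wrap transitions are given by the backward jumps $X^{(i)} \cplt Y^{(i)}$ followed by the forward path ending on $X^{(i+1)}$. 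Each wrap contributes $\Omega(kt)$ vertices via its forward traversal, where $t = b-a$ is the sub-interval distance, so the total cycle length is $\Omega(rkt)$. Taking $r$ to be the largest value compatible with the constraints above (a fixed fraction of $m/k$) makes $rk$ a constant fraction of $m$, which cancels the $m$ in the denominator of $kt \ge k\teps\tn/(4m)$ and delivers a cycle of length $\Omega(\teps\tn) \ge \eps n/1500$ — with a constant independent of $k$, exactly as promised.

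The main hurdle is the simultaneous bookkeeping of all these parameters. The size $m$ of the sub-intervals must be large enough for \Cref{lem:mediansequence} and \Cref{cor:medtranssequence} to apply (forcing $m \ge 2^{\Omega(k)}$); the number $r$ of wraps must be large enough that the summed cycle length is comparable to $\teps\tn$; yet $r$ must remain small enough for iterated \Cref{cor:transturan} to keep finding pairs of transitive $k$-sets in the residual bipartite graph, and for the forbidden-vertex condition of \Cref{lem:mediansequence} to persist across all $r$ wraps. The hypothesis $n \ge \eps^{-41000k}$ is what allows all these constraints to coexist, and the constant $1/1500$ arises from the accumulated losses. The essential new ingredient here is the ``wrapping'' of many essentially identical forward traversals of the interval between $J_a$ and $J_b$ into a single long cycle, since this is what eliminates the exponential-in-$k$ factor from the final constant.
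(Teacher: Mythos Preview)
Your overall strategy matches the paper's, but there is a genuine gap in the parameter count that costs you a factor of $\teps^2$. You correctly note that repeated applications of \Cref{cor:transturan} to the residual bipartite graph between $J_b$ and $J_a$ require ``$rk$ much smaller than $\teps^2 m$''. Yet two paragraphs later you set $r$ to be ``a fixed fraction of $m/k$''. These are incompatible: the density constraint forces $r = O(\teps^2 m/k)$, not $r = \Theta(m/k)$. Plugging the correct bound into your final calculation gives
\[
r\cdot k t \;=\; O\!\left(\teps^2 m\right)\cdot \frac{\teps\tn}{4m} \;=\; O(\teps^3\tn),
\]
which is $\Omega(\eps^3 n)$, not $\Omega(\eps n)$. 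No choice of $m$ repairs this, since $m$ cancels in the product.

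The paper avoids this loss by a two-level partition. It first cuts $\tT$ into $12/\teps$ large blocks $A_1\prec\dots\prec A_t$ of size $m=\teps\tn/12$, locates $A_a,A_b$ with $b\ge a+3$ and backward density $\Omega(\teps^2)$, and then subdivides each $A_i$ into $t'$ subintervals of size $m'\approx\teps^{-\Theta(k)}$. A simple averaging-plus-greedy argument now produces $\Omega(\teps^2 t')$ \emph{pairwise disjoint} subinterval pairs $\{A_{a,j},A_{b,j'}\}$, each already of backward density $\Omega(\teps^2)$. One pair is spent per wrap, so \Cref{cor:transturan} is applied to a fresh pair every time and no density is ever degraded. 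The number of wraps $r$ is then limited only by the forbidden-vertex condition of \Cref{lem:mediansequence} inside $A_{a+1}$, giving $r=\Theta(m'/k)$; each wrap traverses the $t'$ subintervals of $A_{a+1}$ and picks up $\Omega(kt')$ vertices, so the total is $\Theta(m't'\,)=\Theta(m)=\Theta(\teps\tn)$, with no $\teps^2$ loss. The ``many disjoint dense subinterval pairs'' step is precisely the ingredient your outline is missing.
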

\begin{proof}
Let us apply \Cref{cor:longbackedges} to $T$, and let $\tT$ be the $\teps$-\intrans $\tn$-vertex subtournament we obtain, where $\teps\tn \ge \eps n/5$ and $1/4\ge \teps\ge \eps$. Note that this implies $\teps\tn\ge \eps^{-40950k}\ge \teps^{-40950k}$.

Fix any median ordering $\tau$ of $\tT$. Then there is a set $E_{\tau}$ of $\teps ^2\tn^2/4$ backward edges in $\tT$ such that every edge $e\in E_{\tau}$ has length $\len_{\tau}(e) \ge \teps\tn/4$.

Let us split the $\tn$ vertices of $\tT$ into $t=12/\teps$ intervals $A_1\prec \dots \prec A_t$ of size $m=\tn/t =\teps\tn/12 \ge \teps^{-40900k}$ each. Then every edge of $E_{\tau}$ must connect two intervals $A_i$ and $A_{i'}$ with at least 2 other intervals in between (i.e., $i'\ge i+3$). In particular, there must be at least $|E_{\tau}| / t^2 \ge \teps^2 m^2/200$ backward edges going from $A_b$ to $A_a$ for some $a,b$ satisfying $b\ge a+3$.
Now, split each $A_i$ into $t'=m/\teps^{20400k}\ge \teps^{-20500k}$ consecutive subintervals $A_{i,1}\prec \dots\prec A_{i,t'}$ of size $m'=\teps^{-20400k}$. 

\begin{claim}
There are at least $\frac{\teps^2t'}{800}$ disjoint pairs $\{A_{a,j}, A_{b,j'}\}$ such that $d(A_{b,j'}, A_{a,j})\ge \frac{\teps^2}{400}$.
\end{claim}
\begin{proof}
Note that $\sum_{j,j'} d(A_{b,j'},A_{a,j}) = t'^2\cdot d(A_b,A_a)\ge \teps^2t'^2/200$. This means that at least $\teps^2t'^2/400$ of the pairs satisfy $d(A_{b,j'},A_{a,j})\ge \teps^2/400$. We can then greedily choose $\teps^2t'/800$ of these pairs so that they are disjoint from each other.
\end{proof}

Let $r=\frac{m'}{50k}\le \teps^{-20400k}\le \frac{\teps^2t'}{800}$ and let $\{A_{a,j_1}, A_{b,j'_1}\}, \dots, \{A_{a,j_r}, A_{b,j'_r}\}$ be disjoint pairs provided by the claim. We may assume that $j_1>\dots>j_r$.

As $m'>(\frac{\teps}{400})^{-160k}$, we can apply \Cref{cor:transturan} to find sets $Z_i\subs A_{a,j_i}$ and $Z'_i\subs A_{b,j'_i}$, for every $i\in[r]$, that induce transitive tournaments of size $32k$ and satisfy $Z'_i\cplt Z_i$.

Next, we apply \Cref{lem:mediansequence} one by one for every $i\in[r]$ to the interval $A_{a,j_i}\cup A_{a,j_i+1}\cup\dots \cup A_{a+1,t'}$ with $X=Z_i$ to find $4k$-sets $X_{i,1}, \dots, X_{i,s_i}$ such that each of them induces a transitive tournament, and they satisfy $X_{i,1}\subs Z_i$ and $X_{i,1}\cplt\cdots\cplt X_{i,s_i}$. The lemma also implies that for every $j\in[t'-1]$, one of these sets is contained in $A_{a+1,j}\cup A_{a+1,j+1}$ (for example, $X_{i,s_i}\subs A_{a+1,t'-1}\cup A_{a+1,t'}$). In particular, $X_{i,1}\cup\dots\cup X_{i,s_i}$ contains at least $2kt'$ vertices from $A_{a+1}$.

Moreover, by defining the set of forbidden vertices in the lemma as $F=\bigcup_{i'<i} (X_{i',1}\cup\dots\cup X_{i',s_{i'}})$ (which is allowed because $|F\cap A_{\alpha,j}| \le 4kr\le \frac{m'}{8}$ for every $\alpha\in\{a,a+1\}$ and $j\in[t']$), we can ensure that the path blowups $X_{i,1}\cplt\cdots\cplt X_{i,s_i}$ are vertex-disjoint over $i\in[r]$. Combining this with the fact that each such path blowup contains at least $2kt'$ vertices, we see that the $r$ path blowups together cover at least $2kt'r = \frac{t'm'}{25} = \frac{m}{25} = \frac{\teps\tn}{300} \ge \frac{\eps n}{1500}$ vertices.

All we are left to do is connect the path blowups into one big cycle power. We use \Cref{cor:medtranssequence} with $X=X_{i,s_i}$ and $X'=Z'_{i+1}$ for every $i\in[r]$. More precisely, let $t_i\in \{t-1,t\}$ be the index such that $X=X_{i,s_i}\in A_{a+1,t_i}$, and let $X'$ be any $4k$-set in $Z'_{i+1}$ (or $Z'_1$ if $i=r$). We will apply \Cref{cor:medtranssequence} to the interval $A_{a+1,t_i}\cup \dots \cup A_{b,j'_{i+1}}$ to get $k$-sets $Y_{i,0}\cplt\cdots\cplt Y_{i,s'_i}$ such that $Y_{i,0}\subs X_{i,s_i}$ and $Y_{i,s'_i}\subs Z'_{i+1}$ and each of these sets induces a transitive subtournament. This is possible because the interval (which contains the entire $A_{a+2}$) is split into at least $t'>60$ subintervals of size $m'\ge 2^{40500k}$.

To make the path blowups $Y_{i,0}\cplt\cdots\cplt Y_{i,s'_i}$ vertex-disjoint over $i\in[r]$, we just need to forbid all vertices $F=\left(\bigcup_{i'\in[r]} X_{i',s_{i'}}\right) \cup \left(\bigcup_{i'\in[r]} Z'_{i'}\right) \cup \left(\bigcup_{i'<i} (Y_{i',0}\cup\dots\cup Y_{i',s'_{i'}})\right) $ that are already used by other path blowups. As $s'_i\le 5$ for every $i\in[r]$, this set contains $|F|\le 4kr + 8kr + 6kr =18kr \le m'/2$ vertices, so can indeed be used in our applications of \Cref{cor:medtranssequence}.

All in all, we found disjoint vertex sets $X_{i,j}$ and $Y_{i,j}$ that each induce transitive subtournaments of size at least $k$, together contain at least $\frac{\eps n}{1500}$ vertices, and satisfy
\[
X_{1,1}\cplt\cdots\cplt X_{1,s_1-1} \cplt Y_{1,0}\cplt\cdots\cplt Y_{1,s'_1}\cplt X_{2,1}\cplt\cdots \cplt Y_{r,s'_r}\cplt X_{1,1}
\]
This sequence contains the blowup of a cycle of length $\frac{\eps n}{1500}$, as needed.
\end{proof}

\end{document}